\numberwithin{equation}{section}
\theoremstyle{plain}
\newtheorem{theorem}{Theorem}[section]
\newtheorem{lemma}[theorem]{Lemma}
\newtheorem{corollary}[theorem]{Corollary}
\newtheorem{proposition}[theorem]{Proposition}
\theoremstyle{definition}
\newtheorem{definition}[theorem]{Definition}
\theoremstyle{remark}
\newtheorem{remark}[theorem]{Remark}
\newtheorem{case[theorem]}{Case}
\date{\today}      
\author{K. Aldahleh, A. Iosevich, J. Iosevich, J. Jaimangal, A. Mayeli, S. Pack} 
\address{Department of Mathematics, University of Rochester, Rochester, NY}
\email{kaldahle@u.rochester.edu}
\address{Department of Mathematics, University of Rochester, Rochester, NY}
\email{iosevich@gmail.com}
\address{Department of Mathematics, Rochester Institute of Technology, Rochester, NY}
\email{joshuaiosevich@gmail.com}
\address{Department of Mathematics, CUNY Graduate Center, New York, NY}
\email{jonathanjaimangal1027@gmail.com}
\address{Department of Mathematics, CUNY Graduate Center, New York, NY}
\email{amayeli@gc.cuny.edu}
\address{Department of Mathematics, Penn State University, University Park, PA}
\email{skp6221@psu.edu}
\thanks{A.I. was supported in part by the National Science Foundation under grant no. 2154232. A.M. was supported in part by AMS-Simons Research Enhancement Grant, Simon Fellowship,  and the PSC-CUNY research grants.}
\begin{document}

% \title[Signal recovery]{Signal recovery and harmonic analysis}
%\title{On the interplay of sparse signal Recovery and Restriction Theorems on Finite Abelian Groups}
\title[Additive energy uncertainty principle]{Additive energy, uncertainty principle and signal recovery mechanisms}

\begin{abstract} Given a signal $f:G\to\mathbb{C}$, where $G$ is a finite abelian group, under what reasonable assumptions can we guarantee the exact recovery of $f$ from a proper subset of its Fourier coefficients? In 1989, Donoho and Stark established a result \cite{DS89} using the classical uncertainty principle, which states that $|\text{supp}(f)|\cdot|\text{supp}(\hat{f})|\geq |G|$ for any nonzero signal $f$. Another result, first proven by Santose and Symes \cite{SS86}, was based on the Logan phenomenon \cite{L65}. In particular, the result showcases how the $L^1$ and $L^2$ minimizing signals with matching Fourier frequencies often recovers the original signal.

The purpose of this paper is to relate these recovery mechanisms to additive energy, a combinatorial measure denoted and defined by
$$\Lambda(A)=\left| \left\{ (x_1, x_2, x_3, x_4) \in A^4 \mid x_1 + x_2 = x_3 + x_4 \right\} \right|,$$
where $A\subset\mathbb{Z}_N^d$. In the first part of this paper, we use combinatorial techniques to establish an improved variety of the uncertainty principle in terms of additive energy. In a similar fashion as the Donoho-Stark argument, we use this principle to establish an often stronger recovery condition. In the latter half of the paper, we invoke these combinatorial methods to demonstrate two $L^p$ minimizing recovery results.
\end{abstract}

\maketitle

\tableofcontents

\section{Introduction} \label{tssection} Let $f:{\mathbb Z}_N^d \to {\mathbb C}$ be a function on the $d$-dimensional module over $\mathbb{Z}_N:=\mathbb{Z}/N\mathbb{Z}$, where $N$ is an arbitrary positive integer $\ge 2$. %The discrete Fourier transform (DFT) is a very useful tool for time series analysis, often used to clean the time series data by removing the noise. See, for example, \cite{wang2017radar,yang2012stereophonic, tairan2016fourier}, and the references contained therein.
Given $f: {\mathbb Z}_N^d \to \mathbb{C}$, 
  the discrete Fourier transform  (DFT) is defined by $\widehat{f}:\mathbb{Z}_N^d\rightarrow\mathbb{C}$ such that for $m\in\mathbb{Z}_N^d$
$$ \widehat{f}(m)=N^{-d} \sum_{x \in {\mathbb Z}_N^d} \chi(-x \cdot m) f(x), \ m \in {\mathbb Z}_N^d,$$ 
where $\chi(t)=e^{\frac{2\pi i}{N}t}$ for $t\in\mathbb{Z}_N$.
The inverse discrete Fourier transform is given by 

$${f}(x)= \sum_{m \in {\mathbb Z}_N^d} \chi(x \cdot m) \widehat{f}(m), \ x \in {\mathbb Z}_N^d,$$ 
 
If $f$ is encoded via its Fourier transform, it is natural to ask whether $f$ can be recovered from its Fourier coefficients if part of the transmission is lost. In particular, suppose that the values of $\{\widehat{f}(m)\}_{m\in S}$ are {\it not observed} for $S$, a subset of  $\mathbb{Z}_N^d$.
By this, we mean that the discrete Fourier transform $\hat f(m)$ is only given for the elements $m\not\in S$. 
The problem of recovery of signals when some frequency components are missing was studied in the seminal paper by Matolcsi and Szucs (\cite{MS73}), and, independently, by Donoho and Stark (\cite{DS89}). In particular, Donoho and Stark proved the following result, which we state in arbitrary dimensions, as per discussion in \cite{IM24}. 

\begin{theorem}[\cite{DS89}]\label{DS} Let $f: {\mathbb Z}_N^d \to \Bbb C$ be a finite signal in ${\mathbb Z}_N^d$ with $N_t\in \Bbb N$ non-zero entries. Suppose that the set of unobserved frequencies $\{\hat f(m)\}_{m\in \mathbb Z_N}$ is of size $N_w\in \Bbb N$. Then the signal $f$ can be recovered uniquely from the observed frequencies if 
\begin{align} \label{sufficient-size-for-recovery} N_t \cdot N_w < \frac{N^d}{2}.
\end{align} \end{theorem}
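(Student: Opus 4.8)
The plan is to reduce the statement to the classical uncertainty principle $|\mathrm{supp}(f)|\cdot|\mathrm{supp}(\widehat f)|\ge N^d$ (valid for every nonzero $f:\mathbb{Z}_N^d\to\mathbb{C}$ with the normalization fixed above) via the standard ``difference of candidates'' trick. First I would record the uncertainty principle itself, since its proof is short: if $f\not\equiv 0$, choose $x_0$ with $|f(x_0)|=\|f\|_\infty>0$; the inversion formula gives
$$\|f\|_\infty=\left|\sum_{m\in\mathrm{supp}(\widehat f)}\chi(x_0\cdot m)\,\widehat f(m)\right|\le |\mathrm{supp}(\widehat f)|\cdot\|\widehat f\|_\infty,$$
while the definition of the transform gives $\|\widehat f\|_\infty\le N^{-d}\|f\|_1\le N^{-d}|\mathrm{supp}(f)|\cdot\|f\|_\infty$. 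Chaining these two inequalities and cancelling the nonzero factor $\|f\|_\infty$ yields $|\mathrm{supp}(f)|\cdot|\mathrm{supp}(\widehat f)|\ge N^d$.

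Next, for the recovery claim, suppose $f$ and $g$ are both signals supported on at most $N_t$ points that agree on every observed frequency, i.e. $\widehat f(m)=\widehat g(m)$ for all $m\notin S$, where $|S|=N_w$. Set $h=f-g$. Then $\widehat h$ vanishes off $S$, so $|\mathrm{supp}(\widehat h)|\le N_w$, and since $\mathrm{supp}(h)\subseteq\mathrm{supp}(f)\cup\mathrm{supp}(g)$ we get $|\mathrm{supp}(h)|\le |\mathrm{supp}(f)|+|\mathrm{supp}(g)|\le 2N_t$. If $h\not\equiv 0$, the uncertainty principle forces
$$N^d\le |\mathrm{supp}(h)|\cdot|\mathrm{supp}(\widehat h)|\le 2N_tN_w,$$
i.e. $N_tN_w\ge N^d/2$, contradicting the hypothesis \eqref{sufficient-size-for-recovery}. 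Hence $h\equiv 0$ and $f=g$: at most one $N_t$-sparse signal is consistent with the observed data, and it can be reconstructed by solving the resulting (now uniquely solvable) linear system.

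The argument is essentially mechanical once the uncertainty principle is available, so I do not anticipate a genuine obstacle; the only point demanding care is the constant. The factor $2$ in $N^d/2$ is precisely the price of passing from a single sparse signal to a \emph{difference} of two sparse signals, and it is what makes the criterion sharp in the worst case: equality in the uncertainty principle is realized by indicator functions of cosets of subgroups of $\mathbb{Z}_N^d$, and pairing two such functions gives configurations showing \eqref{sufficient-size-for-recovery} cannot be relaxed in general. I would include (or cite) one such example to delimit the sharpness of the theorem, while emphasizing that it is exactly this worst case that the additive-energy refinement developed later in the paper is designed to improve upon for structured signals.
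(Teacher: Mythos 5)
Your argument is correct and is essentially identical to the paper's: form the difference $h=f-g$ of two candidate sparse signals, note $|\mathrm{supp}(h)|\le 2N_t$ and $\mathrm{supp}(\widehat h)\subseteq S$, and invoke the classical uncertainty principle to force $h\equiv 0$ under the hypothesis \eqref{sufficient-size-for-recovery}. The only difference is that you also supply the short proof of the uncertainty principle itself (via $\|f\|_\infty\le|\mathrm{supp}(\widehat f)|\,\|\widehat f\|_\infty$ and $\|\widehat f\|_\infty\le N^{-d}|\mathrm{supp}(f)|\,\|f\|_\infty$), which the paper merely cites; that addition is correct under the paper's normalization.
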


To establish uniqueness,  Donoho and Stark used the classical Fourier uncertainty principle (see e.g. \cite{DS89}) which says that for a nonzero signal $f:\mathbb{Z}_N^d\to\mathbb{C}$,
$$ |\text{supp}(f)| \cdot |\text{supp}(\widehat{f})| \ge N^d.$$

Here, 
$|\text{supp}(f)|$ 
denotes the number of nonzero entries of the signal $f$. 
Indeed, suppose that $f$ cannot be recovered uniquely. Then there exists a signal $g: {\mathbb Z}_N^d\rightarrow\mathbb{C}$ such that 
$|\text{supp}(f)|=|\text{supp}(g)|$, $\widehat{f}=\widehat{g}$ away from the set of missing frequencies, and $f$ is not identically equal to $g$. Let $h=f-g$. Then 
$$ |\text{supp}(h)| \leq 2 |\text{supp}(f)|$$ and $\text{supp}(\widehat{h})$ is the subset of $S$, the set of missing frequencies. By the uncertainty principle, 
$$ 2|\text{supp}(f)| \cdot |S| \ge N^d.$$

Therefore, if we assume that  
\begin{equation} \label{eq: donohostarkrecoverycondition} |\text{supp}(f)| \cdot |S|<\frac{N^d}{2}, \end{equation} we must conclude that $h \equiv 0$, proving Theorem \ref{DS}. Donoho and Stark provide a recovery mechanism based on the least squares method, but this method is rather computationally expensive (exponential time). This method is described in Section \ref{section:minimization} below. 

While more efficient recovery mechanisms have been developed (see e.g. the $l^1$ minimization in \cite{DS89}), they are still rather elaborate. On the other hand, a very simple recovery mechanism was developed in \cite{IM24} for binary signals. Let $E(x)$ be the indicator function of a set $E \subset {\mathbb Z}^d_N$ and $\widehat{E}$ be the Fourier transform of $E$. Suppose that the frequencies $\widehat{E}(m)$ for  $m$ in  $S$ are unobserved. Then, by the Fourier inversion, 
\begin{equation} \label{eq:DRA} E(x)=\sum_{m\in \Bbb Z_N^d} \chi(x \cdot m) \widehat{E}(m)=\sum_{m \notin S} \chi(x \cdot m) \widehat{E}(m)+\sum_{m \in S} \chi(x \cdot m) \widehat{E}(m)=I(x)+II(x). \end{equation} 

Applying the triangle inequality and the definition of the Fourier transform, we see that 
$$ |II(x)| \leq N^{-d}|E||S|.$$

If this quantity is $<\frac{1}{2}$, we can take $|I(x)|$ and round it to $1$ or $0$. This recovers $E(x)$ exactly since it only takes on values $0$ or $1$. This recovery algorithm is known as the {\it Direct Recovery Algorithm}.

\begin{remark} \label{rmk:discretization} The same idea works if $f$ takes values in $\delta {\mathbb Z}$ for some $\delta>0$. In this case, we need to make sure that the modulus of $II(x)$ above is bounded by $\frac{\delta}{2}$ instead of $\frac{1}{2}$, which allows us to recover the original signal exactly by taking the modulus of $I(x)$ and rounding to the nearest value in $\delta {\mathbb Z}$. 
\end{remark} 

\vskip.125in 

There is a number of results in current literature that improve the recovery condition (\ref{eq: donohostarkrecoverycondition}). For example, the following approach was used by the second and fifth listed authors in \cite{IM24}. Their main tool was the following celebrated result due to Jean Bourgain (\cite{Bourgain89}). 

\begin{theorem} \label{bourgaintheorem} Let $\Psi=(\psi_1, \dots, \psi_n)$ denote a sequence of $n$ mutually orthogonal functions, with ${||\psi_i||}_{L^{\infty}(G)} \leq 1$. There exists an index subset $S$ of $\{1,2, \dots, n\}$, with  $|S|>n^{\frac{2}{q}}$,  such that 
$$ {\left|\left| \sum_{i \in S} a_i \psi_i \right|\right|}_{L^q(G)} \leq C(q) {\left( \sum_{i \in S} {|a_i|}^2 \right)}^{\frac{1}{2}}.$$ 

The constant $C(q)$ depends only on $q$ and the estimate above holds for a generic set of size $\lceil n^{\frac{2}{q}} \rceil$, where $\lceil x\rceil$ denotes the smallest integer greater than $x$. 
\end{theorem}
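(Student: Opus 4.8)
Since this is Bourgain's deep resolution of the $\Lambda(q)$-set problem, in the paper one would ordinarily just invoke \cite{Bourgain89}; what follows is a plan for how the proof itself runs. Throughout, $G$ carries its normalized (probability) measure, so orthonormality gives $\|\sum_i a_i\psi_i\|_{L^2(G)}=\|a\|_{\ell^2}$. The plan is to build the set $S$ by random selection. Fix $q>2$, set $\delta=n^{2/q-1}$, let $\{\xi_i\}_{i=1}^{n}$ be independent $\{0,1\}$ selectors with $\mathbb E\xi_i=\delta$, and put $S=\{i:\xi_i=1\}$. A Chernoff bound gives $|S|\ge\tfrac12\delta n\gtrsim n^{2/q}$ with probability tending to $1$, which also yields the ``generic set of size $\lceil n^{2/q}\rceil$'' assertion. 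Since the conclusion of the theorem for this $S$ is exactly $\sup_{\|a\|_{\ell^2}\le1}\|\sum_i\xi_i a_i\psi_i\|_{L^q(G)}\le C(q)$, Markov's inequality reduces the whole problem to the bound
$$\mathbb E\,\sup_{\|a\|_{\ell^2}\le1}\Bigl\|\sum_i\xi_i a_i\psi_i\Bigr\|_{L^q(G)}\;\lesssim_q\;1 .$$

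Next I would symmetrize. Writing $\sum_i\xi_i a_i\psi_i=\delta\sum_i a_i\psi_i+\sum_i(\xi_i-\delta)a_i\psi_i$ and applying the standard symmetrization inequality to the centered part bounds the left side of the display by $\delta\,\sup_{\|a\|_{\ell^2}\le1}\|\sum_i a_i\psi_i\|_{L^q}+2\,\mathbb E\,\sup_{\|a\|_{\ell^2}\le1}\|\sum_i\varepsilon_i\xi_i a_i\psi_i\|_{L^q}$, where $\{\varepsilon_i\}$ are independent Rademacher signs. The deterministic first term is harmless: interpolation together with $\|\sum_i a_i\psi_i\|_{L^\infty}\le\|a\|_{\ell^1}\le\sqrt n$ gives $\|\sum_i a_i\psi_i\|_{L^q}^{q}\le\|\sum_i a_i\psi_i\|_{L^\infty}^{q-2}\|\sum_i a_i\psi_i\|_{L^2}^{2}\le n^{(q-2)/2}$, so this term is at most $\delta\, n^{1/2-1/q}=n^{1/q-1/2}\le1$. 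Hence everything comes down to estimating the Rademacher process $\mathbb E\,\sup_{\|a\|_{\ell^2}\le1}\|\sum_{i\in S}\varepsilon_i a_i\psi_i\|_{L^q(G)}$.

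For this I would argue conditionally on $\{\xi_i\}$. Viewing $a\mapsto\sum_{i\in S}\varepsilon_i a_i\psi_i$ as a random vector in $L^q(G)$ whose increments over the ball $B_2^{S}=\{a:\text{supported in }S,\ \|a\|_{\ell^2}\le1\}$ are sub-Gaussian, one controls the supremum by a Dudley-type entropy integral $\int_0^\infty(\log N(B_2^{S},\rho,u))^{1/2}\,du$, where $\rho$ is a metric on $B_2^{S}$ built from the $L^q$ norm and the bounded-orthonormal structure of the $\psi_i$ (morally $\rho(a,b)\asymp\|\sum_{i\in S}(a_i-b_i)\psi_i\|_{L^q}$, up to Khintchine-type comparisons). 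The crux is then the covering-number estimate for the $\ell^2$-ball in this metric: for small $u$ one uses the trivial volumetric bound $\log N\lesssim|S|\log(2/u)$, and for larger $u$ a dual-Sudakov-type inequality --- but the latter requires an \emph{a priori} control on the operator norm $\ell_2^S\to L^q$, i.e.\ on the very $\Lambda(q)$ constant one is trying to bound.

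Resolving this circularity is where I expect the real work to lie. Following Bourgain, one runs the selection in stages (equivalently, sets up an induction on $n$): a weak a priori bound on the $\Lambda(q)$ constant is fed into the entropy integral to produce a strictly better bound, and the delicate point is to verify that this iteration closes and that the output constant depends on $q$ alone. Once the bookkeeping is complete, the dependence on $\delta$ carried through the chaining is exactly what forces the exponent $2/q$ and hence $|S|\gtrsim n^{2/q}$, finishing the proof. (When $q$ is an even integer one can instead attack the chaos term by a direct moment expansion combined with an $\varepsilon$-net over $B_2^{S}$, which sidesteps part of the entropy machinery, though not the care needed in balancing the moment order against $|S|$.)
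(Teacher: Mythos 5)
The paper does not prove this statement: Theorem \ref{bourgaintheorem} is imported verbatim from \cite{Bourgain89} and used as a black box (via Corollary \ref{bourgaindiscretecorollary}), so there is no internal proof to compare against, and your instinct that one would ``ordinarily just invoke'' the citation is exactly what the authors do. Your outline is a faithful description of the architecture of Bourgain's actual argument --- random selectors of mean $\delta=n^{2/q-1}$, reduction via symmetrization to a Rademacher process, the harmless deterministic term handled by $L^2$--$L^\infty$ interpolation, Dudley-type chaining against covering numbers of the $\ell^2$-ball in the $L^q$-induced metric, and the bootstrap needed because the dual-Sudakov step presupposes control of the very $\Lambda(q)$ constant being estimated.

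That said, as a proof it has a genuine gap, and you are candid about where it sits: the entropy estimate that breaks the circularity and the verification that the iteration closes with a constant depending only on $q$ are not steps you carry out --- they \emph{are} the theorem, occupying most of Bourgain's paper (and of Talagrand's later majorizing-measure proof). Two smaller points. First, your reduction by Markov's inequality from the first moment $\mathbb{E}\sup_{\|a\|_{\ell^2}\le 1}\|\sum_i \xi_i a_i\psi_i\|_{L^q}\lesssim_q 1$ yields the existence of a good set (failure probability bounded away from $1$ after adjusting constants), but the paper's Remark \ref{remarkgeneric} asserts the conclusion for $S_\omega$ with probability $1-o_N(1)$; upgrading to that requires a concentration argument for the supremum, not just its expectation. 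Second, the identification of the chaining metric $\rho$ with $\|\sum_i(a_i-b_i)\psi_i\|_{L^q}$ ``up to Khintchine-type comparisons'' is where the bounded-orthonormal hypothesis $\|\psi_i\|_{L^\infty}\le 1$ must actually be used, and simply asserting the comparison hides a nontrivial step. None of this is a criticism of your judgment --- reproving Bourgain's theorem is not a reasonable ask --- but the proposal should be read as a roadmap with a citation, not a self-contained proof.
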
 

\vskip.125in 

\begin{remark}\label{remarkgeneric}   
The notion of {\bf generic} in Theorem \ref{bourgaintheorem} means the following. Let $0<\delta<1$ and let ${\{\xi_j\}}_{1 \leq j \leq n}$ denote independent $0,1$ random variables of mean $\int \xi_j(\omega) d\omega=\delta$, $1 \leq j \leq n$. Choosing $\delta=n^{\frac{2}{q}-1}$ generates a random subset $S_{\omega}=\{1 \leq j \leq n: \xi_j(\omega)=1\}$ of $\{1,2, \dots n\}$ of expected size $\lceil n^{\frac{2}{q}} \rceil$. Theorem \ref{bourgaintheorem} holding for a {\bf generic} set $S$ means that the result holds for the set $S_{\omega}$ with probability $1-o_N(1)$. In simpler language, if we randomly choose a subset of $\{1,2, \dots, n\}$ by choosing each element with probability $p=n^{\frac{2}{q}-1}$, then Theorem \ref{bourgaintheorem} holds for such a set with probability close to $1$. 
\end{remark} 

\vskip.125in 

The following consequence of Theorem \ref{bourgaintheorem} is particularly relevant to our investigation. 

\begin{corollary} \label{bourgaindiscretecorollary} Given $f: {\mathbb Z}_N^d \to {\mathbb C}$,   for a generic subset $\Sigma$ of ${\mathbb Z}_N^d$ of size $\lceil N^{\frac{2d}{q}} \rceil$, where $q>2$, if $\widehat{f}$ is supported in $\Sigma$, we have 
\begin{equation} \label{ZNBourgain} {||f||}_{L^q(\mu)} \leq C(q) {||f||}_{L^2(\mu)}, \end{equation} where $C(q)$ depends only on $q$.  

%\footnote{
%\color{red} Could you remind me why this inequality is independent of $\Sigma$? Should not this be like $\|\sum_{m\in \Sigma} \hat f(m) \chi_m\|_{L^q} \leq C(q)\|\sum_{m\in \Sigma} \hat f(m) \chi_m\|_{L^2}$? -- Azita} \footnote{\color{purple} Bourgain describes his proof for a generic subset in his paper on section 5 here: \\
%\url{https://link.springer.com/article/10.1007/BF02392838} \\
%The corollary should follow from letting $\Psi$, in Theorem 1.3, denote a sequence of $\psi_m(x)=\chi(x\cdot m)$ for each $m\in\mathbb{Z}_N^d$ and $a_m=\hat{f}(m)$. The left hand side of Theorem 1.3 ends up being $||f||_{L_q(\mu)}$. The right hand side should just be $||f||_{L^2(\mu)}$ up to a constant by Plancherel's. -- Karam}

\vskip.125in 

Here and throughout, we define 
\begin{equation} \label{Lpnormalized} {||f||}_{L^p(\mu)}:={\left(\frac{1}{N^d} \sum_{x \in {\mathbb Z}_N^d} {|f(x)|}^p \right)}^{\frac{1}{p}}. \end{equation} 
\end{corollary}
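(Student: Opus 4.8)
The plan is to deduce Corollary \ref{bourgaindiscretecorollary} from Theorem \ref{bourgaintheorem} by choosing the right orthonormal system and normalization, and then tracking how the constants transform between the two measures. First I would take $G = {\mathbb Z}_N^d$ equipped with the normalized counting measure $\mu$, and let $\Psi$ be the family of characters $\psi_m(x) = \chi(x\cdot m)$ for $m \in {\mathbb Z}_N^d$, of which there are $n = N^d$. These are mutually orthogonal in $L^2(\mu)$ and satisfy $\|\psi_m\|_{L^\infty} = 1$, so they meet the hypotheses of Bourgain's theorem. Applying Theorem \ref{bourgaintheorem} with this $n$ and this exponent $q > 2$ produces a generic subset $\Sigma \subset {\mathbb Z}_N^d$ with $|\Sigma| > n^{2/q} = N^{2d/q}$ (and of size $\lceil N^{2d/q}\rceil$ for a generic choice, matching the statement) such that for any coefficients $\{a_m\}_{m\in\Sigma}$,
$$ \left\|\sum_{m\in\Sigma} a_m \psi_m\right\|_{L^q(\mu)} \le C(q)\left(\sum_{m\in\Sigma} |a_m|^2\right)^{1/2}. $$

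Next I would specialize: if $\widehat f$ is supported in $\Sigma$, then by Fourier inversion $f(x) = \sum_{m\in\Sigma} \widehat f(m)\,\chi(x\cdot m)$, so the left-hand side above with $a_m = \widehat f(m)$ is exactly $\|f\|_{L^q(\mu)}$. It then remains to identify the right-hand side with $\|f\|_{L^2(\mu)}$. This is where the normalization bookkeeping happens: with the DFT convention in the paper, $\widehat f(m) = N^{-d}\sum_x \chi(-x\cdot m) f(x)$, Plancherel takes the form $\sum_{m} |\widehat f(m)|^2 = N^{-d}\sum_x |f(x)|^2 = \|f\|_{L^2(\mu)}^2$. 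Hence $\left(\sum_{m\in\Sigma}|\widehat f(m)|^2\right)^{1/2} = \|f\|_{L^2(\mu)}$ exactly, and combining the two displays yields $\|f\|_{L^q(\mu)} \le C(q)\|f\|_{L^2(\mu)}$, which is \eqref{ZNBourgain}.

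The genericity statement transfers verbatim: the set $\Sigma$ is chosen by the same random procedure described in Remark \ref{remarkgeneric} with $n = N^d$ and inclusion probability $p = N^{2d/q - 1}$, and the conclusion holds with probability $1 - o_N(1)$. The one point that requires a little care — and the only real obstacle — is making sure the constant $C(q)$ is genuinely independent of $N$ and $d$: Bourgain's theorem gives a constant depending only on $q$ for the abstract system $\Psi$, but one must check that our choice of $\mu$-normalized $L^2$ and $L^q$ norms is precisely the normalization under which Bourgain's statement is scale-invariant, so that no hidden factor of $N^d$ or $|\Sigma|$ creeps in. Since both $\|\cdot\|_{L^p(\mu)}$ as defined in \eqref{Lpnormalized} and the $L^\infty$ bound $\|\psi_m\|_\infty \le 1$ are already in the normalized form Bourgain uses, this works out cleanly, and the constant is inherited unchanged.
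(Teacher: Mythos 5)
Your proposal is correct and is exactly the intended derivation: the paper states the corollary as an immediate consequence of Theorem \ref{bourgaintheorem} (without writing out details), obtained by applying Bourgain's theorem to the $n=N^d$ characters $\chi(x\cdot m)$ on ${\mathbb Z}_N^d$ with the normalized measure $\mu$, then identifying $\bigl(\sum_{m\in\Sigma}|\widehat f(m)|^2\bigr)^{1/2}$ with $\|f\|_{L^2(\mu)}$ via Plancherel under the paper's DFT convention. Your normalization bookkeeping is accurate, so nothing further is needed.
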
 

The authors in \cite{IM24} used Corollary \ref{bourgaindiscretecorollary} to prove that if $S$ is generic of size $\lceil N^{\frac{2d}{q}} \rceil$ in the sense described above, $\widehat{f}$ is supported in $S$, and $f$ is supported in $E \subset {\mathbb Z}_N^d$, then 
\begin{equation} \label{eq: iosevichmayeli} |E| \ge \frac{N^d}{{(C(q))}^{\frac{1}{\frac{1}{2}-\frac{1}{q}}}}. \end{equation} 

Indeed, the left-hand side of (\ref{ZNBourgain}) can be rewritten as 
$$ N^{-\frac{d}{q}} {|E|}^{\frac{1}{q}} {\left( \frac{1}{|E|} \sum_x {|f(x)|}^q \right)}^{\frac{1}{q}}, $$ while the right-hand side can be rewritten as 
$$ C(q) N^{-\frac{d}{2}} {|E|}^{\frac{1}{2}} {\left( \frac{1}{|E|} \sum_x {|f(x)|}^2 \right)}^{\frac{1}{2}}.$$

Combining terms and using the fact that 
$$ {\left( \frac{1}{|E|} \sum_x {|f(x)|}^2 \right)}^{\frac{1}{2}} \leq {\left( \frac{1}{|E|} \sum_x {|f(x)|}^q \right)}^{\frac{1}{q}}$$ by H\"older's inequality yields (\ref{eq: iosevichmayeli}). 

Using the Donoho-Stark signal argument described above, Iosevich and Mayeli proved that if $f: {\mathbb Z}_N^d \to {\mathbb C}$ is a signal supported in $E$, and the frequencies ${\{\widehat{f}(m)\}}_{m \in S}$ are unobserved, where $S$ is a generic subset of ${\mathbb Z}_N^d$ of size $\lceil N^{\frac{2}{q}} \rceil$, then $f$ can be recovered exactly and uniquely provided that 
$$ |E|<\frac{N^d}{2{(C(q))}^{\frac{1}{\frac{1}{2}-\frac{1}{q}}}}.$$

This result shows that much better recovery procedures are possible if the set $S$ of missing frequencies satisfies additional structural hypotheses (see also other results in \cite{IM24}). The purpose of this paper is to prove a user-friendly result of this type, where the structure of $S$ is measured in terms of an easily computable quantity called additive energy. 

\vskip.25in 

\subsection{Additive energy uncertainty principle} Additive energy is a combinatorial property with a strong relationship to the Fourier transform, as we will soon demonstrate.

\begin{definition}   [Additive Energy]
Let \(A \subset \mathbb{Z}_N^d\). The {\bf additive energy} of \(A\), denoted by \(\Lambda(A)\), is given by 
\[\Lambda(A) = \left| \left\{ (x_1, x_2, x_3, x_4) \in A^4 \mid x_1 + x_2 = x_3 + x_4 \right\} \right|.\] 
\end{definition}

% Notice that for any subset $A\subset \Bbb Z_N^d$, we have 
% $$2|A|^2-|A| \leq \Lambda(A) \leq |A|^3.$$

% % The lower bound is based on the simple observation that $x_1+x_2=x_3+x_4$ always holds if $x_1=x_3, x_2=x_4$, or if $x_1=x_4, x_2=x_3$. We subtract the size of $A$ to compensate for instances when $x_1=x_2$. The upper bound is obtained by fixing $x_1,x_2,x_3$ and expressing $x_4$ in terms of the other variables. The question then boils down to how many triplets, $(x_1,x_2,x_3)\in A^3$, are such that $x_1+x_2-x_3\in A$. The best case scenario is that this always holds true for any triplet, implying an upper bound on $\Lambda(A)$ by $|A|^3$.

We first demonstrate the technique we use to relate additive energy with recovery in the binary signal case. Let $E\subset\mathbb{Z}_N^d$ and we define the indicator function $E(x)$
by  $E(x)=1$ when $x\in E$ and $E(x)=0$ otherwise. Suppose $\{\widehat{E}(m)\}_{m\in S}$ is lost for $S\subset\mathbb{Z}_N^d$. As before, by inverse discrete Fourier transform,  we can write  
$$ E(x)=\sum_{m \notin S} \chi(x \cdot m) \widehat{E}(m)+\sum_{m \in S} \chi(x \cdot m) \widehat{E}(m)=I(x)+II(x).$$
Instead of applying the triangle inequality, we estimate $|II(x)|$ by H\"older's inequality for the conjugate pair $(4/3, 4)$.
\begin{align*}
\left|\sum_{m \in S} \chi(x \cdot m) \widehat{E}(m)\right|&\leq |S|^\frac{3}{4}\left(\sum_{m\in\mathbb{Z}_N^d}\left|\widehat{E}(m)\right|^4\right)^\frac{1}{4}=|S|^\frac{3}{4}\left(\sum_{m\in\mathbb{Z}_N^d}\left|\sum_{x\in E}\chi(-x\cdot m)E(x)\right|^4\right)^\frac{1}{4}\\ 
&=|S|^\frac{3}{4}N^{-d}\left(\underset{x_3,x_4\in E}{\sum_{x_1,x_2,}}\sum_{m\in\mathbb{Z}_N^d}\chi((x_3+x_4-x_1-x_2)\cdot m)E(x_1)E(x_2)\overline{E(x_3)E(x_4)}\right)^\frac{1}{4}\\
&=|S|^\frac{3}{4}N^{-\frac{3d}{4}}\left(\underset{x_i\in E}{\sum_{x_1+x_2=x_3+x_4;}}E(x_1)E(x_2)\overline{E(x_3)E(x_4)}\right)^\frac{1}{4}=|S|^\frac{3}{4}N^{-\frac{3d}{4}}\Lambda^\frac{1}{4}(E)
\end{align*}
To establish a recovery condition, we need the right-hand side to be less than $\frac{1}{2}$. Simplifying we arrive at the proceeding recovery condition.
\begin{proposition}
    If $E:\mathbb{Z}_N^d\to\mathbb{C}$ is the indicator function of $E\subset\mathbb{Z}_N^d$ and $\{\widehat{E}(m)\}_{m\in S}$ are lost for $S\subset\mathbb{Z}_N^d$, then $f$ is recoverable by the Direct Recovery Algorithm provided that
    $$|S|\Lambda^\frac{1}{3}(E)<\frac{N^d}{2^\frac{4}{3}}.$$
\end{proposition}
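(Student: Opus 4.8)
The plan is to read off the Proposition directly from the H\"older estimate established in the lines just before it, combined with the rounding step of the Direct Recovery Algorithm. Recall that for each $x \in \mathbb{Z}_N^d$ we have the splitting $E(x) = I(x) + II(x)$ with $I(x) = \sum_{m \notin S}\chi(x\cdot m)\widehat{E}(m)$ computable from the observed Fourier coefficients, and that the computation above produces the bound $|II(x)| \le |S|^{3/4} N^{-3d/4}\,\Lambda^{1/4}(E)$, uniformly in $x$. Thus the whole task reduces to choosing hypotheses on $|S|$ and $\Lambda(E)$ that force this quantity below the threshold $\tfrac12$ required by the algorithm.

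First I would impose $|S|^{3/4} N^{-3d/4}\,\Lambda^{1/4}(E) < \tfrac12$, which by the displayed bound guarantees $|II(x)| < \tfrac12$ for every $x$ at once, the right-hand side no longer depending on $x$. Raising both sides to the power $4/3$ and multiplying by $N^d$ rewrites this as $|S|\,\Lambda^{1/3}(E) < N^d/2^{4/3}$, exactly the stated hypothesis; in other words the condition of the Proposition is precisely the assertion that $\|II\|_{\infty} < \tfrac12$.

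It remains to check that $\|II\|_{\infty} < \tfrac12$ makes the Direct Recovery Algorithm exact. Since $E$ is $\{0,1\}$-valued and $I(x) = E(x) - II(x)$, each $I(x)$ lies within Euclidean distance $\tfrac12$ of $E(x) \in \{0,1\}$; because the open disks of radius $\tfrac12$ about $0$ and about $1$ are disjoint, $E(x)$ is the unique element of $\{0,1\}$ at distance $<\tfrac12$ from $I(x)$. Concretely, $|I(x)| = |II(x)| < \tfrac12$ when $E(x)=0$, while $|I(x)| \ge 1 - |II(x)| > \tfrac12$ when $E(x)=1$, so rounding $|I(x)|$ to the nearer of $0$ and $1$ returns $E(x)$ for every $x$, reconstructing the indicator function, and hence the set $E$, from the observed frequencies.

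I do not anticipate a genuine obstacle: the substantive work, namely the identity $\sum_{m\in\mathbb{Z}_N^d}|\widehat{E}(m)|^4 = N^{-3d}\Lambda(E)$ and the H\"older bound it feeds, is already carried out in the excerpt, and what is left is elementary exponent bookkeeping together with the observation that the unit gap inside $\{0,1\}$ is what pins down the constant $2^{4/3}$. The one point I would take care to state is that the estimate for $|II(x)|$ is uniform in $x$, so a single inequality relating $|S|$ and $\Lambda(E)$ recovers $E$ simultaneously at all coordinates.
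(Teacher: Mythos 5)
Your proposal is correct and follows essentially the same route as the paper: the uniform H\"older bound $|II(x)|\le |S|^{3/4}N^{-3d/4}\Lambda^{1/4}(E)$ is forced below $\tfrac12$, and raising to the power $4/3$ gives exactly the stated condition. Your explicit verification that $\|II\|_\infty<\tfrac12$ makes the rounding step exact is slightly more detailed than the paper's one-line remark, but the argument is the same.
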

Although this result is intriguing and often stronger than the previous Direct Recovery result (see (\ref{eq:DRA}) above), there's certainly a lot more generalization one can procure along this line of reasoning. To incorporate the use of energy for recovery on any signal, we discovered two new varieties of the uncertainty principle. The following result is interesting in its own right.

\begin{theorem}[Additive Uncertainty Principle]\label{energyuncertaintyprincipletheorem}
Let  $f:\mathbb{Z}_N^d \rightarrow \mathbb{C}$ be a nonzero signal with support in $E$,  and let $\widehat{f}$ denote its Fourier transform with support in $\Sigma$. 
 Then for any  $\alpha\in[0,1]$,
\begin{itemize} 
\item[i)] \label{additiveenergyeq} $N^d \leq {(|E| \cdot \Lambda^{\frac{1}{3}}(\Sigma))}^{1-\alpha} \cdot {( \Lambda^{\frac{1}{3}}(E) \cdot |\Sigma|)}^{\alpha}.$
\item[ii)] \label{additiveenergyrestrictioneq} $N^d \leq {\left(|E| \max_{U \subset \Sigma} \frac{\Lambda(U)}{{|U|}^2}\right)}^{1-\alpha} \cdot {\left(|\Sigma| \max_{F \subset E} \frac{\Lambda(F)}{{|F|}^2} \right)}^{\alpha}$.
\end{itemize}
\end{theorem}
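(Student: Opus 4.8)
The plan is to prove the two endpoint cases $\alpha=1$ of (i) and (ii) for an arbitrary nonzero $f$ with $\operatorname{supp}f\subseteq E$ and $\operatorname{supp}\widehat f\subseteq\Sigma$, and then to deduce everything else formally. Applying the $\alpha=1$ statements to $g:=\widehat f$ — whose Fourier transform is $N^{-d}f(-\,\cdot\,)$, supported in $-E$ with $|-E|=|E|$ and $\Lambda(-E)=\Lambda(E)$ — turns them into the $\alpha=0$ statements; and since each of the two factors on the right of (i) and (ii) is $\ge N^d$ by its own endpoint, raising those two endpoint inequalities to the powers $1-\alpha$ and $\alpha$ and multiplying gives (i) and (ii) for every $\alpha\in[0,1]$.

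Both endpoints start from the computation already carried out for binary signals in the introduction: Fourier inversion and Hölder with the conjugate pair $(4/3,4)$ give $|f(x)|\le|\Sigma|^{3/4}\,\|\widehat f\|_{\ell^4}$ for every $x$, and expanding $\|\widehat f\|_{\ell^4}^4$ and summing over the frequency variable yields the identity $\|\widehat f\|_{\ell^4}^4=N^{-3d}\sum_{x_1+x_2=x_3+x_4}f(x_1)f(x_2)\overline{f(x_3)f(x_4)}$. For (i) I bound the right-hand side by $N^{-3d}\|f\|_{\ell^\infty}^4\Lambda(E)$ (all four variables lie in $E$), evaluate the inequality $|f(x)|\le|\Sigma|^{3/4}N^{-3d/4}\|f\|_{\ell^\infty}\Lambda^{1/4}(E)$ at a point where $|f|$ attains its nonzero maximum, cancel $\|f\|_{\ell^\infty}$, and obtain $N^d\le|\Sigma|\Lambda^{1/3}(E)$.

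For (ii) I instead use the matching \emph{lower} bound $\|\widehat f\|_{\ell^4}^4\ge|\Sigma|^{-1}\|\widehat f\|_{\ell^2}^4=|\Sigma|^{-1}N^{-2d}\|f\|_{\ell^2}^4$ (Cauchy--Schwarz, since $\widehat f$ lives on $\Sigma$) together with $\big|\sum_{x_1+x_2=x_3+x_4}f(x_1)f(x_2)\overline{f(x_3)f(x_4)}\big|\le\big\||f|\ast|f|\big\|_{\ell^2}^2$. This reduces (ii) to the comparison inequality
\[
\big\||f|\ast|f|\big\|_{\ell^2}^2\ \le\ \Big(\max_{F\subseteq E}\tfrac{\Lambda(F)}{|F|^2}\Big)\,\|f\|_{\ell^2}^4,
\]
because substituting it into $N^{-2d}|\Sigma|^{-1}\|f\|_{\ell^2}^4\le N^{-3d}\big\||f|\ast|f|\big\|_{\ell^2}^2$ and cancelling $\|f\|_{\ell^2}^4$ gives exactly $N^d\le|\Sigma|\max_{F\subseteq E}\Lambda(F)/|F|^2$. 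To prove the comparison I set $g=|f|$, decompose $g$ into dyadic level sets $g=\sum_j g_j$ with $g_j\asymp 2^{-j}\mathbf 1_{E_j}$ and $E_j\subseteq E$, expand $\|g\ast g\|_{\ell^2}^2$ over quadruples of levels, and estimate each block by $\Lambda(E_{j_1},\dots,E_{j_4})\le\prod_i\Lambda(E_{j_i})^{1/4}\le\big(\max_{F\subseteq E}\Lambda(F)/|F|^2\big)\prod_i|E_{j_i}|^{1/2}$ (the first inequality is Cauchy--Schwarz applied to $\Lambda(A,B,C,D)=\sum_s(\mathbf 1_A\ast\mathbf 1_B)(s)(\mathbf 1_C\ast\mathbf 1_D)(s)$ and $\|\mathbf 1_A\ast\mathbf 1_B\|_{\ell^2}\le\Lambda(A)^{1/4}\Lambda(B)^{1/4}$, the second is the definition of the maximum); since $2^{-j}|E_j|^{1/2}\asymp\|g_j\|_{\ell^2}$, this bounds $\|g\ast g\|_{\ell^2}^2$ by $\big(\max_{F\subseteq E}\Lambda(F)/|F|^2\big)\big(\sum_j\|g_j\|_{\ell^2}\big)^4$.

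The main obstacle is the very last step: $\big(\sum_j\|g_j\|_{\ell^2}\big)^4$ can exceed $\|g\|_{\ell^2}^4=\big(\sum_j\|g_j\|_{\ell^2}^2\big)^2$ by a factor equal to the number of active dyadic scales, so summing crudely only proves the comparison — hence (ii) — up to a factor that is logarithmic in $|E|$ (after discarding negligibly small values of $g$ there are $O(\log|E|)$ scales). I expect to remove this loss by the tensor power trick: applying the weak form of (ii) to $f^{\otimes k}$ on $\mathbb Z_N^{dk}$, whose support is $E^k$ and whose Fourier support is $\Sigma^k$, turns a bound of the shape $N^d\le C(\log|E|)^{O(1)}|\Sigma|\max_{F\subseteq E}\Lambda(F)/|F|^2$ into $N^{dk}\le C(k\log|E|)^{O(1)}|\Sigma|^k\max_{F\subseteq E^k}\Lambda(F)/|F|^2$; using that $E\mapsto\max_{F\subseteq E}\Lambda(F)/|F|^2$ is submultiplicative under direct products (fiber over one coordinate and iterate the bilinear bound $\Lambda(A,B)\le\Lambda(A)^{1/2}\Lambda(B)^{1/2}$), taking $k$-th roots and letting $k\to\infty$ kills all extraneous factors and recovers the clean inequality. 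The clean $\Sigma$-endpoint then follows by the duality noted above, and interpolation in $\alpha$ completes both (i) and (ii).
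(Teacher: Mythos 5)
Your part i) is correct and is essentially the paper's own proof: Fourier inversion, H\"older with the conjugate pair $(4/3,4)$, expansion of $\sum_m|\widehat f(m)|^4$ into the additive energy of $E$, evaluation at a point where $|f|$ attains its maximum, and cancellation of $\|f\|_\infty$; your reduction of general $\alpha$ to the two endpoints via the duality $g=\widehat f$ (with $\widehat g=N^{-d}f(-\,\cdot\,)$) and the factorization $N^d=N^{d(1-\alpha)}N^{d\alpha}$ is exactly the paper's remark following the theorem.

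Part ii) has a genuine gap. The comparison inequality you reduce to,
\begin{equation*}
\bigl\||f|*|f|\bigr\|_{\ell^2}^2\;\le\;\Bigl(\max_{F\subset E}\frac{\Lambda(F)}{|F|^2}\Bigr)\,\|f\|_{\ell^2}^4,
\end{equation*}
is false. Take $E=\{0,1,2\}\subset\mathbb{Z}_N$ with $N\ge 5$ and $f=(1,t,1)$ with $t^2=3/2$. Then $\|f*f\|_{\ell^2}^2=1+(2t)^2+(2+t^2)^2+(2t)^2+1=t^4+12t^2+6=105/4$ while $\|f\|_{\ell^2}^4=(2+t^2)^2=49/4$, so the functional ratio is $15/7\approx 2.143$; but $\max_{F\subset E}\Lambda(F)/|F|^2=\Lambda(E)/9=19/9\approx 2.111$, since the two-point subsets give $6/4$ and singletons give $1$. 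So the supremum of $\|f*f\|_2^2/\|f\|_2^4$ over functions supported in $E$ strictly exceeds its restriction to (normalized) indicators, and no argument can close your reduction. The tensor-power repair is moreover circular: the submultiplicativity $\max_{F\subset E\times E}\Lambda(F)/|F|^2\le\bigl(\max_{F\subset E}\Lambda(F)/|F|^2\bigr)^2$ that the trick requires reduces, upon fibering $F$ over the first coordinate and setting $h(a)=|F_a|^{1/2}$ with $F_a=\{b:(a,b)\in F\}$, to the very comparison inequality above applied to the non-indicator function $h$. In fact it genuinely fails: your own weak (log-lossy) bound shows $\bigl(\max_{F\subset E^k}\Lambda(F)/|F|^2\bigr)^{1/k}\to 15/7>19/9$ in the example, because level sets of $f^{\otimes k}$ realize the weights of $f$ as honest subsets of $E^k$.

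The paper sidesteps all of this by quoting the $(L^{4/3},L^2)$ restriction estimate (Theorem 3.12 of \cite{IM24}), whose constant is exactly $\bigl(\max_{U\subset\Sigma}\Lambda(U)/|U|^2\bigr)^{1/4}$; there the $L^{4/3}$ norm on the physical side is what lets the dyadic decomposition close without a logarithmic loss. Combining that estimate with Plancherel and H\"older in the form $\|f\|_{4/3}\le|E|^{1/4}\|f\|_2$ gives $N^d\le|E|\max_{U\subset\Sigma}\Lambda(U)/|U|^2$ directly. Your $L^2\to L^4$ route necessarily passes through the false comparison inequality, so it cannot be repaired by a cleverer summation of the dyadic pieces; you would need to switch to the $L^{4/3}$ endpoint (or cite the restriction theorem) to prove part ii).
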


\vskip.125in 

\begin{remark} To prove part i), it is sufficient to establish the inequality $N^d \leq |E| \cdot \Lambda^{\frac{1}{3}}(\Sigma)$. The inequality $N^d \leq |\Sigma| \cdot \Lambda^{\frac{1}{3}}(E)$ follows by reversing the roles of $E$ and $\Sigma$, and the general case follows from these two by writing $N^d=N^{d(1-\alpha)} \cdot N^{d \alpha}$, $0 \leq \alpha \leq 1$. 
\end{remark}

Perhaps the key technical point in the proof of the first part of Theorem \ref{energyuncertaintyprincipletheorem} is the fact that if $S \subset {\mathbb Z}_N^d$, then 
$$ \Lambda(S)=N^{3d} \sum_{m \in {\mathbb Z}_N^d} {|\widehat{S}(m)|}^4.$$

To illustrate this point visually, let $S$ be a random subset of ${\mathbb Z}_{100}$ of size $10$. Then the plot of moduli of the Fourier coefficients of the indicator function of $S$ is given in Figure \ref{FTrandombinary}. It is quite apparent that aside from the $0$ coefficient, the rest of the moduli of the Fourier coefficient are quite small. On the other hand, suppose that $S$ is an arithmetic progression of length $10$ in ${\mathbb Z}_{100}$. The plot of the moduli of the Fourier coefficients is given in Figure \ref{FTprogressionbinary}. In this case, the modulus of the Fourier coefficients takes on several large values. 

\begin{figure}
\centering
\includegraphics[scale=.4]{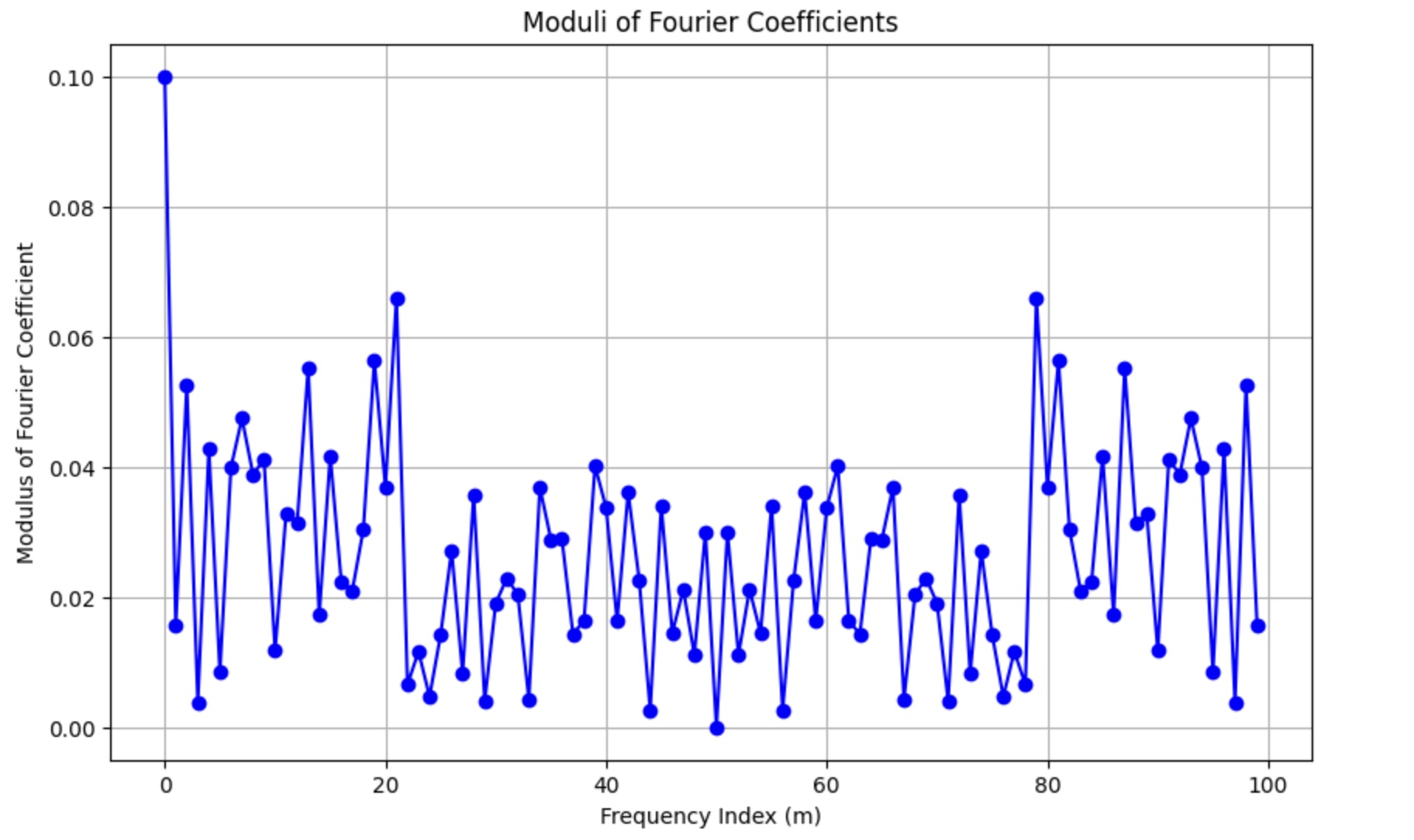}
 \caption{The discrete Fourier transform of a random set.}
 \label{FTrandombinary}

\end{figure}

\begin{figure}
\centering
\includegraphics[scale=.4]{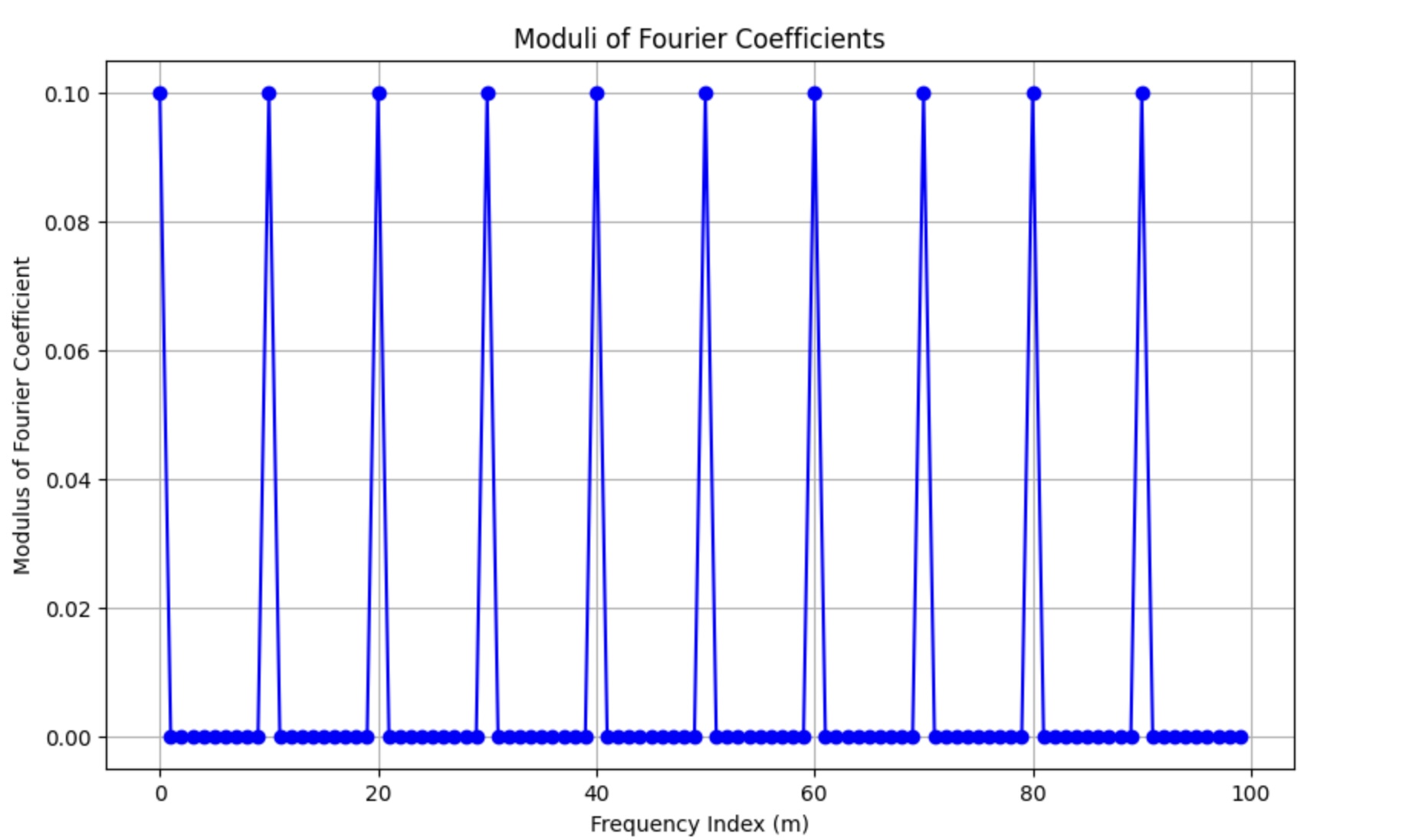}
 \caption{The discrete Fourier transform of an arithmetic progression.}
 \label{FTprogressionbinary}
\end{figure}

The key to proving part ii) Theorem \ref{energyuncertaintyprincipletheorem} is the following ``universal" restriction inequality. 

\begin{theorem}[Theorem 3.12, \cite{IM24}] \label{energyrestrictiontheorem} Let $f: {\mathbb Z}_N^d \to {\mathbb C}$ and let $\Sigma$ be a subset of ${\mathbb Z}_N^d$. Then
\begin{equation} \label{explicitconstantsrestrictionequation} {\left( \frac{1}{|\Sigma|} \sum_{m \in \Sigma} {|\widehat{f}(m)|}^2 \right)}^{\frac{1}{2}} \leq {\left(\frac{|\Sigma|}{N^{\frac{d}{2}}} \right)}^{-\frac{1}{2}} \cdot {\left( \max_{U \subset \Sigma} \frac{\Lambda(U)}{{|U|}^2} \right)}^{\frac{1}{4}} \cdot N^{-d} \cdot {\left( \sum_{x \in {\mathbb Z}_N^d} {|f(x)|}^{\frac{4}{3}} \right)}^{\frac{3}{4}}. \end{equation} 
\end{theorem}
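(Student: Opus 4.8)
The plan is to prove Theorem~\ref{energyrestrictiontheorem} by a $TT^{*}$-type duality that turns the restriction bound into an $L^{4}$ extension bound, and then to prove the extension bound by a dyadic decomposition together with the additive-energy identity $\Lambda(S)=N^{3d}\sum_{m}|\widehat S(m)|^{4}$ emphasized above.

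\medskip
\emph{Step 1: reduction to an extension estimate.} Let $P_{\Sigma}f$ be the function with $\widehat{P_{\Sigma}f}=\widehat f\cdot\mathbf 1_{\Sigma}$, that is, $P_{\Sigma}f(x)=\sum_{m\in\Sigma}\chi(x\cdot m)\widehat f(m)$. Unwinding the definition of the Fourier transform gives the exact identity $\sum_{m\in\Sigma}|\widehat f(m)|^{2}=N^{-d}\sum_{x}P_{\Sigma}f(x)\overline{f(x)}$, and H\"older's inequality with the conjugate pair $(4,4/3)$ bounds the right side by $N^{-d}\|P_{\Sigma}f\|_{\ell^{4}}\bigl(\sum_{x}|f(x)|^{4/3}\bigr)^{3/4}$. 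So it is enough to bound $\|P_{\Sigma}f\|_{\ell^{4}}$ in terms of $\sum_{m\in\Sigma}|\widehat f(m)|^{2}$; equivalently, to prove the extension estimate
\[
\sum_{x}\Bigl|\sum_{m\in\Sigma}\chi(x\cdot m)g(m)\Bigr|^{4}\ \le\ N^{d}\,\Bigl(\max_{U\subseteq\Sigma}\tfrac{\Lambda(U)}{|U|^{2}}\Bigr)\,\Bigl(\sum_{m}|g(m)|^{2}\Bigr)^{2}
\]
for every $g$ supported in $\Sigma$. Taking $g=\widehat f\cdot\mathbf 1_{\Sigma}$ in this estimate, inserting the resulting bound for $\|P_{\Sigma}f\|_{\ell^{4}}$ into the H\"older inequality, using $\sum_{m}|g(m)|^{2}=\sum_{m\in\Sigma}|\widehat f(m)|^{2}$, cancelling one factor of $\bigl(\sum_{m\in\Sigma}|\widehat f(m)|^{2}\bigr)^{1/2}$ (the degenerate case being trivial), and dividing by $|\Sigma|^{1/2}$, one arrives at exactly \eqref{explicitconstantsrestrictionequation}.

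\medskip
\emph{Step 2: the extension estimate.} Expanding the fourth power and using $\sum_{x}\chi(x\cdot s)=N^{d}\mathbf 1_{s=0}$, the left side equals $N^{d}\sum_{m_{1}+m_{2}=m_{3}+m_{4}}g(m_{1})g(m_{2})\overline{g(m_{3})g(m_{4})}$, which by the triangle inequality is dominated by the same expression with $|g|$ in place of $g$; so we may take $g\ge0$ and must show $\|g*g\|_{\ell^{2}}^{2}\le M\|g\|_{\ell^{2}}^{4}$ with $M=\max_{U\subseteq\Sigma}\Lambda(U)/|U|^{2}$. Normalize $\|g\|_{\ell^{2}}=1$ (so $g\le1$) and write $g=\sum_{j\ge0}g_{j}$, $g_{j}=g\cdot\mathbf 1_{U_{j}}$, with $U_{j}=\{m:2^{-j-1}<g(m)\le2^{-j}\}$, so that $g*g\le\sum_{j,k}2^{-j-k}\,\mathbf 1_{U_{j}}*\mathbf 1_{U_{k}}$ pointwise. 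By the energy identity and Cauchy--Schwarz,
\[
\|\mathbf 1_{U_{j}}*\mathbf 1_{U_{k}}\|_{\ell^{2}}^{2}=N^{3d}\sum_{m}|\widehat{U_{j}}(m)|^{2}|\widehat{U_{k}}(m)|^{2}\le\Lambda(U_{j})^{1/2}\Lambda(U_{k})^{1/2}\le M\,|U_{j}|\,|U_{k}|,
\]
using $\Lambda(U_{j})\le M|U_{j}|^{2}$ in the last step. Since $|U_{j}|\asymp 2^{2j}\|g_{j}\|_{\ell^{2}}^{2}$, this gives $\|g_{j}*g_{k}\|_{\ell^{2}}\le M^{1/2}\|g_{j}\|_{\ell^{2}}\|g_{k}\|_{\ell^{2}}$, hence $\|g*g\|_{\ell^{2}}\le M^{1/2}\bigl(\sum_{j}\|g_{j}\|_{\ell^{2}}\bigr)^{2}$, which together with $\sum_{j}\|g_{j}\|_{\ell^{2}}^{2}=1$ is meant to close the estimate.

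\medskip
\emph{Main obstacle.} The delicate point is precisely that last line: passing from $\sum_{j}\|g_{j}\|_{\ell^{2}}^{2}=1$ to control of $\sum_{j}\|g_{j}\|_{\ell^{2}}$ costs a factor of the square root of the number of active dyadic levels, so the crude triangle inequality over level pieces loses a logarithm. Removing this loss (and thereby getting a clean, rather than merely bounded, constant) is the technical heart of the proof and is where the argument of \cite{IM24} does its work; one expects to exploit the pairwise disjointness of the $U_{j}$ together with the sharper bound $\|\mathbf 1_{U_{j}}*\mathbf 1_{U_{k}}\|_{\ell^{2}}^{2}\le\min(|U_{j}|,|U_{k}|)\,|U_{j}|\,|U_{k}|$, interpolated against the energy bound above, to recover a factor decaying in $|j-k|$. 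Everything else---Parseval, H\"older, the energy identity---is routine bookkeeping.
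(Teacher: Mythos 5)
This theorem is quoted from \cite{IM24} and not reproved in the present paper, so I can only assess your argument on its own merits. Your Step 1 is fine: the $TT^{*}$ reduction via the identity $\sum_{m\in\Sigma}|\widehat f(m)|^{2}=N^{-d}\sum_{x}P_{\Sigma}f(x)\overline{f(x)}$, H\"older with exponents $(4,4/3)$, and the bookkeeping with the powers of $N$ and $|\Sigma|$ all check out; the theorem is indeed equivalent to the extension estimate you state, with constant exactly $1$. The setup of Step 2 is also correct, including the identity $\|\mathbf 1_{U}*\mathbf 1_{V}\|_{\ell^{2}}^{2}=N^{3d}\sum_{m}|\widehat U(m)|^{2}|\widehat V(m)|^{2}\le\Lambda(U)^{1/2}\Lambda(V)^{1/2}$.

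However, there is a genuine gap at the step you yourself flag as the ``main obstacle,'' and it is not a removable technicality along the route you chose. Your dyadic decomposition yields $\|g*g\|_{\ell^{2}}\le M^{1/2}\bigl(\sum_{j}2^{-j}|U_{j}|^{1/2}\bigr)^{2}\asymp M^{1/2}\bigl(\sum_{j}\|g_{j}\|_{\ell^{2}}\bigr)^{2}$, which is an $\ell^{2,1}$ (Lorentz/restricted strong type) bound: passing to $\|g\|_{\ell^{2}}^{2}=\sum_{j}\|g_{j}\|_{\ell^{2}}^{2}$ costs a factor of the number of occupied dyadic levels, which is unbounded in $N$. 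On top of that, the relation $|U_{j}|\asymp2^{2j}\|g_{j}\|_{\ell^{2}}^{2}$ already loses a fixed factor of $4$, so even a summable off-diagonal gain would not recover the constant $1$ demanded by \eqref{explicitconstantsrestrictionequation}. Your proposed repair --- interpolating the energy bound against $\|\mathbf 1_{U_{j}}*\mathbf 1_{U_{k}}\|_{\ell^{2}}^{2}\le\min(|U_{j}|,|U_{k}|)\,|U_{j}|\,|U_{k}|$ to get decay in $|j-k|$ --- is a sensible idea but is not carried out, and there is no reason to expect it to produce a clean constant. Indeed, the present paper confronts exactly this level-set summation in Lemma \ref{lemma:restrictedstrongtype}, where closing the analogous gap requires the stronger hypothesis $\Lambda(U)\le C|U|^{\alpha}$ for \emph{all} $U\subset S$, an auxiliary parameter $\beta>\alpha$, and an explicit multiplicative loss $2\bigl(17+\frac{1}{1-2^{-(\beta-\alpha)}}\bigr)$. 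So the heart of Theorem \ref{energyrestrictiontheorem} --- the passage from the indicator-function energy bound $\Lambda(U)\le M|U|^{2}$ to the full $\ell^{2}(\Sigma)\to\ell^{4}$ extension inequality with constant $1$ --- remains unproved in your write-up, and you would need to consult or reconstruct the actual argument of \cite{IM24} for that step.
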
 

Given part i) in Theorem \ref{energyuncertaintyprincipletheorem} with $\alpha=0$, we can improve Theorem \ref{DS} in many cases. It is important to note that the recovery condition in \eqref{DS}, like the classical uncertainty principle, is sharp. However, in all cases where the additive energy on either support is nontrivial, we observe a strictly lower bound. Ultimately, this yields the following improved recovery condition.

\begin{corollary} Let $f:\mathbb{Z}_N^d\rightarrow\mathbb{C}$ supported in $E\subset\mathbb{Z}_N^d$. Suppose $\{\hat{f}(m)\}_{m\in S}$ are unobserved for some $S\subset\mathbb{Z}_N^d$. Then the signal $f$ can be recovered uniquely if 
\[ \min \left\{ \Lambda^\frac{1}{3}(S)|E|, \ \max_{U \subset S} \frac{\Lambda(U)}{{|U|}^2}|E|  \right\}<\frac{N^d}{2}.\]

\end{corollary}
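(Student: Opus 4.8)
The plan is to mimic, almost verbatim, the Donoho--Stark uniqueness argument reproduced earlier in the introduction, but with the classical uncertainty principle replaced by the two inequalities supplied by Theorem~\ref{energyuncertaintyprincipletheorem} at $\alpha=0$. Assume, for contradiction, that $f$ is \emph{not} uniquely recoverable from the observed frequencies. Then, exactly as in the Donoho--Stark template, there is a signal $g:\mathbb{Z}_N^d\to\mathbb{C}$ with $|\text{supp}(g)|\le|\text{supp}(f)|\le|E|$, with $\widehat g(m)=\widehat f(m)$ for every $m\notin S$, and with $g\not\equiv f$. Put $h=f-g$. Then $h\not\equiv 0$; its spatial support satisfies $\text{supp}(h)\subseteq\text{supp}(f)\cup\text{supp}(g)$, so $|\text{supp}(h)|\le 2|E|$; and, by linearity of the Fourier transform, $\widehat h=\widehat f-\widehat g$ vanishes off $S$, i.e.\ $\widehat h$ has support contained in $S$.

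Next I would apply Theorem~\ref{energyuncertaintyprincipletheorem} to the nonzero signal $h$, taking the ``$E$'' of that theorem to be $\text{supp}(h)$ and the ``$\Sigma$'' of that theorem to be $S$ (legitimate since $\text{supp}(\widehat h)\subseteq S$), and specializing to $\alpha=0$. Part~(i) then gives
\[ N^d\le|\text{supp}(h)|\cdot\Lambda^{\frac13}(S)\le 2|E|\cdot\Lambda^{\frac13}(S), \]
and part~(ii) gives
\[ N^d\le|\text{supp}(h)|\cdot\max_{U\subseteq S}\frac{\Lambda(U)}{|U|^2}\le 2|E|\cdot\max_{U\subseteq S}\frac{\Lambda(U)}{|U|^2}, \]
the last inequality in each line being the bound $|\text{supp}(h)|\le 2|E|$ recorded above. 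Taking the better (smaller) of the two right-hand sides yields
\[ \min\left\{\Lambda^{\frac13}(S)\,|E|,\ \max_{U\subseteq S}\frac{\Lambda(U)}{|U|^2}\,|E|\right\}\ \ge\ \frac{N^d}{2}, \]
which contradicts the hypothesis. Hence no such $g$ exists, and $f$ is determined uniquely by $\{\widehat f(m)\}_{m\notin S}$.

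I expect no serious obstacle: once Theorem~\ref{energyuncertaintyprincipletheorem} is in hand the corollary is pure bookkeeping. The two points that merit a word of care are (a) reproducing the factor $2$ precisely as in Donoho--Stark, via $|\text{supp}(h)|\le 2|\text{supp}(f)|\le 2|E|$ (here one uses that the competitor $g$ is taken no less sparse than $f$, so $h$ really is supported on at most $2|E|$ points), and (b) the harmless remark that ``$\text{supp}(\widehat h)\subseteq S$'' already matches the hypothesis ``$\widehat h$ has support in $S$'' of Theorem~\ref{energyuncertaintyprincipletheorem}, so no separate monotonicity of $\Lambda$ is strictly needed (though one may instead invoke $\Lambda(A)\le\Lambda(B)$ and $\max_{U\subseteq A}\Lambda(U)/|U|^2\le\max_{U\subseteq B}\Lambda(U)/|U|^2$ for $A\subseteq B$, both immediate from $A^4\subseteq B^4$). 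Finally, it is worth recording that this genuinely improves Theorem~\ref{DS}: since $\Lambda(S)\le|S|^3$ one has $\Lambda^{\frac13}(S)\le|S|$, and likewise $\max_{U\subseteq S}\Lambda(U)/|U|^2\le|S|$, so the new condition is implied by $|E|\,|S|<N^d/2$ but holds in strictly more cases whenever $S$ has small additive energy.
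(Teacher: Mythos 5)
Your proposal is correct and follows exactly the route the paper intends: the paper's own ``proof'' is the one-line remark that one sets $h=f-g$ as in the Donoho--Stark argument and applies Theorem~\ref{energyuncertaintyprincipletheorem} with $\alpha=0$ to $h$, which is precisely what you carry out (with the useful extra care about $|\mathrm{supp}(h)|\le 2|E|$ and the monotonicity of $\Lambda$ under inclusion). No discrepancies to report.
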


The proof of this corollary follows by the same argument as in the proof of Theorem \ref{DS}. More precisely, let $h=f-g$, where $f$ and $g$ are chosen, as in the argument in Theorem \ref{DS}. 

%Suppose that $f$ cannot be recovered uniquely. Then there exists a signal $g: {\mathbb Z}_N^d\rightarrow\mathbb{C}$ such that $|supp(f)|=|supp(g)|$, $\hat{f}=\hat{g}$ away from S, and $f \neq g$. Just as before, we define $h=f-g$. As before, $|supp(h)|\leq 2|supp(f)|$. Since $supp(\hat{h}) \subset S$, we can see that $\Lambda(supp(\hat{h})) \leq \Lambda(S)$. Here we apply \eqref{additiveenergyeq} and get $2|supp({h})|\Lambda^{\frac{1}{3}}(S) \geq N$. Therefore, if we assume that $|supp(h)|\Lambda^{\frac{1}{3}}(S) < \frac{N}{2}$, then we conclude that $h$ is identically 0 and $f=g$.

\begin{remark} A different approach to achieving stronger uncertainty is through $(p,q)$-restriction, a property of sets that has been thoroughly researched in the context of signal recovery by Iosevich and Mayeli in \cite{IM24}.
\end{remark} 

\subsection{Good energy and bad energy} An interesting situation arises when $f: {\mathbb Z}_N^d \to {\mathbb C}$ is a signal supported in $S$, with $S$ a disjoint union of $S_1$ and $S_2$, where the additive energy of $S_1$ is small, and the additive energy of $S_2$ is large.
The most obvious question is, if $S=S_1 \cup S_2$, where $S_1 \cap S_2 = \emptyset$, what can we say about $\Lambda(S_1 \cup S_2)$? This leads us to the following simple result. 

\begin{lemma} \label{lemma:weknowholder} Let $S \subset {\mathbb Z}_N^d$,  and let $S=\cup_{i=1}^n S_i$, with  $S_i$s pairwise disjoint. Then 
\begin{equation} \label{eq:sumofenergies} \Lambda(S) \leq n^3 \cdot \sum_{i=1}^n \Lambda(S_i). \end{equation} 
\end{lemma}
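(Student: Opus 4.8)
The plan is to use the Fourier-analytic identity for additive energy, namely $\Lambda(T) = N^{3d} \sum_{m} |\widehat{T}(m)|^4$ for any $T \subset \mathbb{Z}_N^d$, which was highlighted as the key technical point in the proof of Theorem \ref{energyuncertaintyprincipletheorem}. Since $S = \cup_{i=1}^n S_i$ with the $S_i$ pairwise disjoint, we have the pointwise identity $\widehat{S}(m) = \sum_{i=1}^n \widehat{S_i}(m)$ for every $m \in \mathbb{Z}_N^d$. Thus $\Lambda(S) = N^{3d} \sum_m \left| \sum_{i=1}^n \widehat{S_i}(m) \right|^4$.

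The next step is to bound the inner fourth power. For fixed $m$, by H\"older's inequality (or the power-mean inequality) applied to the sum of $n$ terms, $\left| \sum_{i=1}^n \widehat{S_i}(m) \right|^4 \leq \left( \sum_{i=1}^n |\widehat{S_i}(m)| \right)^4 \leq n^3 \sum_{i=1}^n |\widehat{S_i}(m)|^4$, where the last inequality is the standard $\ell^1$-$\ell^4$ bound $\|a\|_1^4 \leq n^3 \|a\|_4^4$ on $n$-dimensional vectors. Summing over $m \in \mathbb{Z}_N^d$ and multiplying by $N^{3d}$ gives
\[
\Lambda(S) = N^{3d} \sum_m \left| \sum_{i=1}^n \widehat{S_i}(m) \right|^4 \leq n^3 \sum_{i=1}^n N^{3d} \sum_m |\widehat{S_i}(m)|^4 = n^3 \sum_{i=1}^n \Lambda(S_i),
\]
which is exactly \eqref{eq:sumofenergies}.

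There is essentially no obstacle here: everything reduces to the Plancherel-type identity for $\Lambda$ and a one-line application of H\"older. If one prefers to avoid the Fourier side entirely, the same bound follows combinatorially: a quadruple $(x_1,x_2,x_3,x_4) \in S^4$ with $x_1 + x_2 = x_3 + x_4$ is counted by a choice of indices $(i_1,i_2,i_3,i_4) \in \{1,\dots,n\}^4$ telling which $S_{i_j}$ each coordinate lies in, and for each such index tuple the count is $\left| \{(x_1,x_2,x_3,x_4) \in S_{i_1} \times S_{i_2} \times S_{i_3} \times S_{i_4} : x_1+x_2 = x_3+x_4\} \right|$, which by Cauchy--Schwarz in the additive-energy sense is at most $\Lambda(S_{i_1})^{1/4}\Lambda(S_{i_2})^{1/4}\Lambda(S_{i_3})^{1/4}\Lambda(S_{i_4})^{1/4} \le \max_j \Lambda(S_{i_j})$; summing the $n^4$ index tuples and using $\max_j \Lambda(S_{i_j}) \le \sum_{i} \Lambda(S_i)$ after a slightly more careful accounting recovers the $n^3$ constant. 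I would present the Fourier proof as the main argument since it is the cleanest, and remark that the constant $n^3$ is generally not optimal but suffices for the applications to "good energy / bad energy" decompositions that follow.
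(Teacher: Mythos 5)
Your main argument is correct and is essentially identical to the paper's proof: both use the identity $\Lambda(S)=N^{3d}\sum_m |\widehat{S}(m)|^4$, expand $\widehat{S}=\sum_{i=1}^n \widehat{S_i}$ via disjointness, and apply H\"older's inequality in the form $\|a\|_1^4\leq n^3\|a\|_4^4$ before summing over $m$. The combinatorial alternative you sketch is a nice aside (and the $n^3$ constant does come out cleanly via AM--GM on the four energy factors), but it is not needed.
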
 

\vskip.125in 

To prove the lemma, observe that 
\begin{align*} \Lambda(S)&=N^{3d} \sum_m {|\widehat{S}(m)|}^4=N^{3d} \sum_m {\left|\sum_{i=1}^n \widehat{S}_i(m)\right|}^4  \leq N^{3d} \sum_m {\left( \sum_{i=1}^n |\widehat{S}_i(m)| \right)}^4. 
\end{align*}

Now, by applying the  H\"older's inequality with  the conjugate pair $(4/3, 4)$, we can upper bound the  right-hand side of the inequality by 

\begin{align*}
 N^{3d} n^3 \sum_m \sum_{i=1}^n{|\widehat{S}_i(m)|}^4. 
\end{align*}
 Notice, this expression equals 
$$ n^3 \sum_{i=1}^n \sum_m {|\widehat{S}_i(m)|}^4 =n^3 \sum_{i=1}^n \Lambda(S_i).$$
This completes the proof of the lemma.   

\vskip.125in 

Taking Lemma \ref{lemma:weknowholder} into account, we revisit Figure \ref{FTrandombinary} and Figure \ref{FTprogressionbinary}. Suppose that $S \subset {\mathbb Z}_{200}$ is the union of the two sets considered in Figure \ref{FTrandombinary} and Figure \ref{FTprogressionbinary}. The plot of the moduli of the Fourier coefficients is shown in Figure \ref{FTmixedeven}. It is clear from the plot that in this case the large Fourier coefficients still dominate, which is consistent with the conclusions of Lemma \ref{lemma:weknowholder}. 

Finally, we consider the case that illustrates the Remark \ref{remark:dominate}. Here a subset of ${\mathbb Z}_{500}$ consisting of $20$ random points, and take a union with an arithmetic progression of length $5$. The plot of the moduli of Fourier coefficients is given by Figure \ref{FTmixeduneven}. It is visible that while there are still a few high peaks, they are dominiated by the random part of the set. 

\begin{figure}
\centering
\includegraphics[scale=.4]{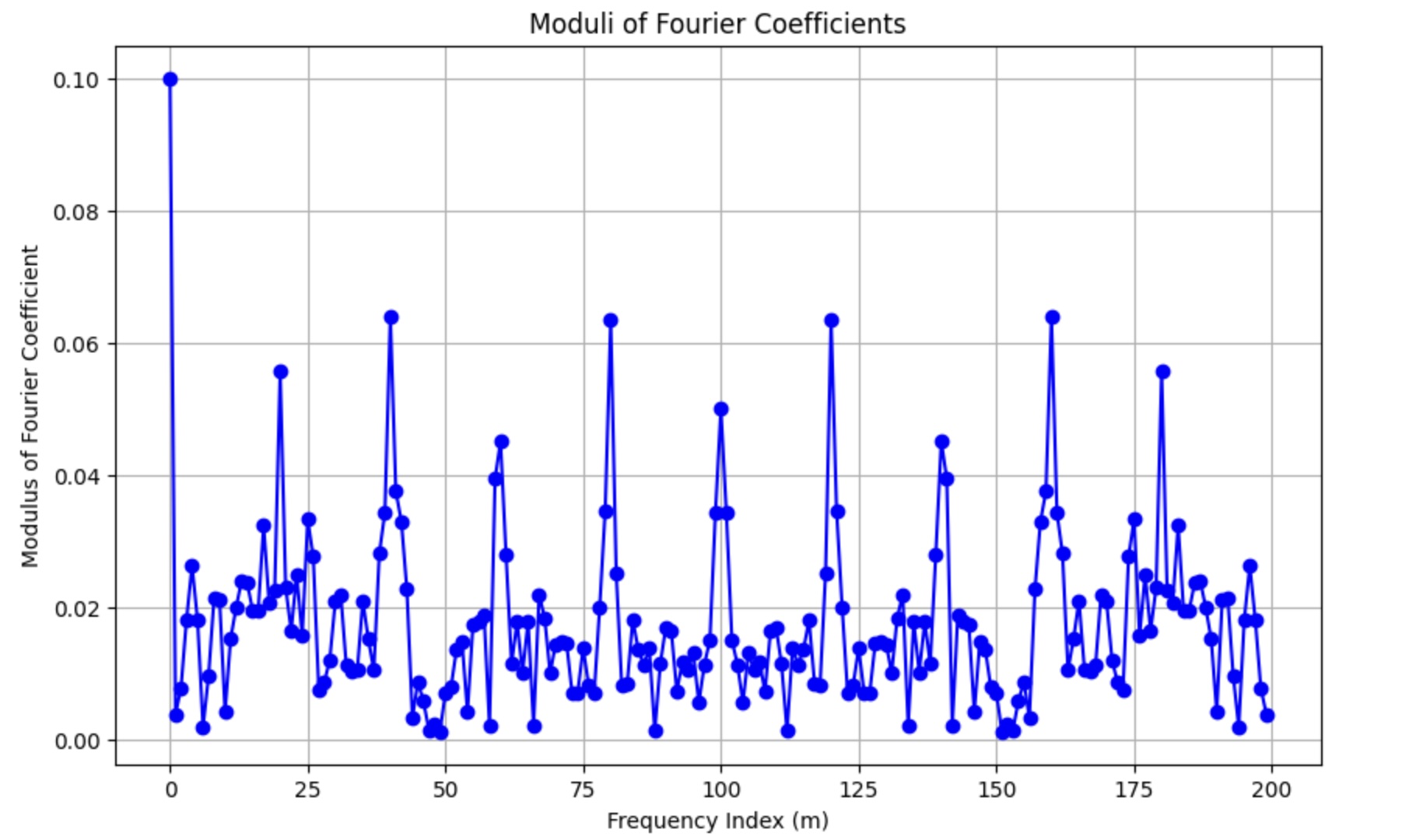}
 \caption{The discrete Fourier transform of the union of an arithmetic progression and a random set.}
 \label{FTmixedeven}
\end{figure}

\begin{figure}
\centering
\includegraphics[scale=.4]{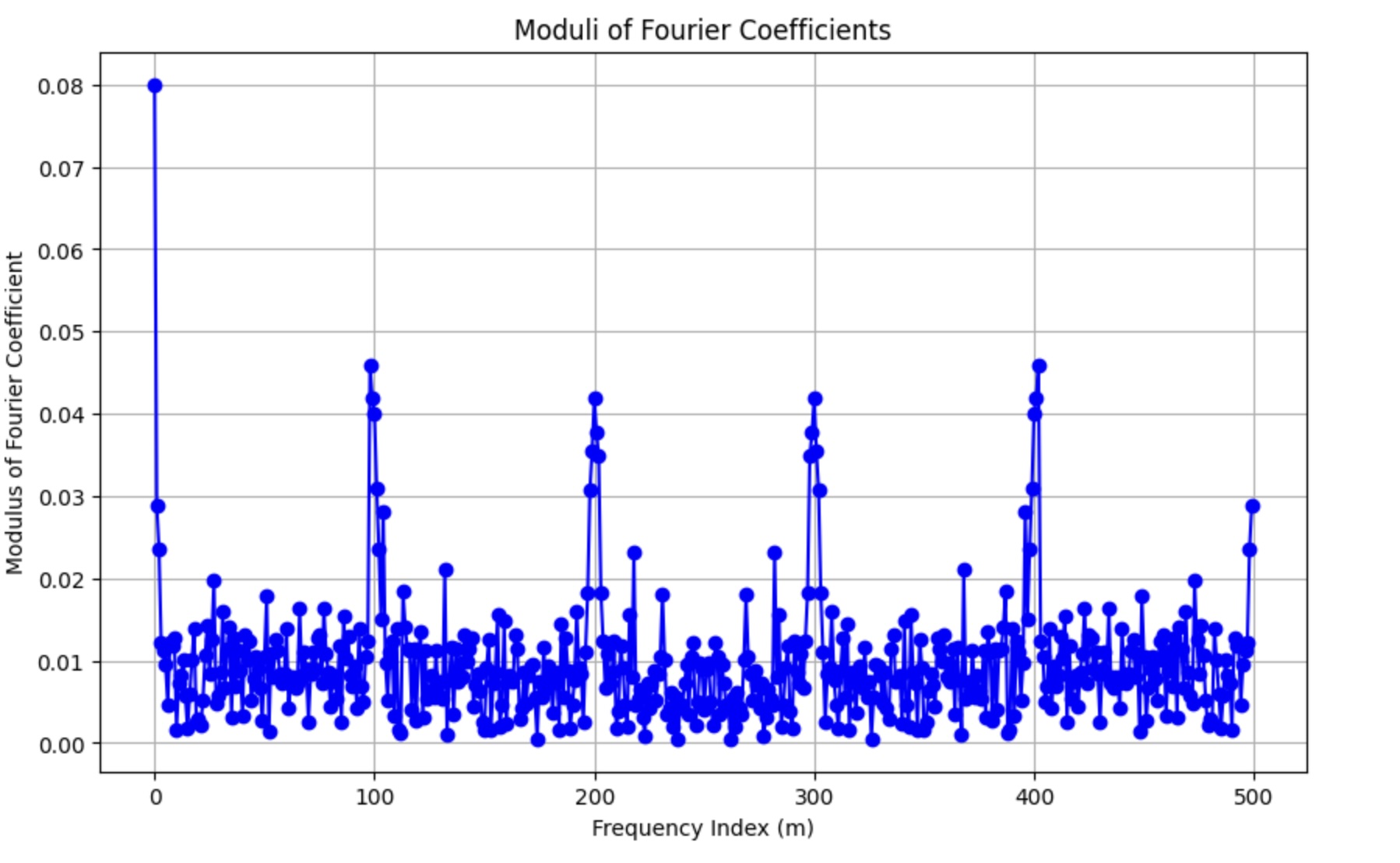}
 \caption{The discrete Fourier transform of a union of an arithmetic progression and a random set, with the random part of the set much larger in size.}
 \label{FTmixeduneven}
\end{figure}

\vskip.25in 

With Lemma \ref{lemma:weknowholder} in two, we obtain the following version of Theorem \ref{energyuncertaintyprincipletheorem}. 

\begin{theorem} \label{energyuncertaintyprincipletheoremenhanced} Let $f: {\mathbb Z}_N^d \to {\mathbb C}$ supported in $E \subset {\mathbb Z}_N^d$, with $\widehat{f}$ supported in $S \subset {\mathbb Z}_N^d$. Suppose that $S$ is a disjoint union of the sets $S_i$, $1 \leq i \leq n$. Then 
\begin{equation} \label{eq: sumsofenergiesup} |E| \cdot n^3 \cdot \sum_{i=1}^n \Lambda(S_i) \ge N^d. \end{equation}

Similarly, if $E$ is the disjoint union of the sets  $E_i$, $1 \leq i \leq m$, then 
\begin{equation} \label{eq: sumsofenergiesbackwardsup} |S| \cdot m^3 \cdot \sum_{i=1}^m \Lambda(E_i) \ge N^d. \end{equation} 
\end{theorem}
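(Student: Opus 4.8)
The plan is to deduce both inequalities directly from part~i) of Theorem~\ref{energyuncertaintyprincipletheorem} together with Lemma~\ref{lemma:weknowholder}; no new Fourier-analytic input is needed. For \eqref{eq: sumsofenergiesup} I would start from the $\alpha=0$ case of Theorem~\ref{energyuncertaintyprincipletheorem}(i), which with $\Sigma=S$ reads $N^d \le |E|\,\Lambda^{\frac13}(S)$, equivalently $N^{3d} \le {|E|}^3\,\Lambda(S)$. Applying Lemma~\ref{lemma:weknowholder} to the disjoint decomposition $S=\bigcup_{i=1}^n S_i$ gives $\Lambda(S) \le n^3 \sum_{i=1}^n \Lambda(S_i)$, and substituting produces
\[ N^{3d} \le {|E|}^3\, n^3 \sum_{i=1}^n \Lambda(S_i). \]
Taking cube roots already yields the (sharper) bound $N^d \le |E|\, n\,\bigl(\sum_{i=1}^n \Lambda(S_i)\bigr)^{\frac13}$; the form stated in \eqref{eq: sumsofenergiesup} then follows from the trivial inequality $|E| \le N^d$, since ${|E|}^3 = |E|\cdot{|E|}^2 \le |E|\cdot N^{2d}$ turns the display into $N^{3d} \le N^{2d}\,|E|\,n^3\sum_{i=1}^n \Lambda(S_i)$, and we divide by $N^{2d}$.

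For the reverse inequality \eqref{eq: sumsofenergiesbackwardsup} I would run the same argument with the roles of $E$ and $\Sigma$ interchanged, i.e.\ use the $\alpha=1$ case of Theorem~\ref{energyuncertaintyprincipletheorem}(i), namely $N^d \le |S|\,\Lambda^{\frac13}(E)$, hence $N^{3d}\le {|S|}^3\,\Lambda(E)$. Applying Lemma~\ref{lemma:weknowholder} to $E=\bigcup_{j=1}^m E_j$ gives $\Lambda(E)\le m^3\sum_{j=1}^m \Lambda(E_j)$, and the proof concludes exactly as above, this time using $|S| \le N^d$ to replace ${|S|}^3$ by $N^{2d}\,|S|$.

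The whole argument is essentially bookkeeping, since the substantive content — the identity $\Lambda(S)=N^{3d}\sum_m {|\widehat S(m)|}^4$ and the Hölder step underlying both Theorem~\ref{energyuncertaintyprincipletheorem} and Lemma~\ref{lemma:weknowholder} — is already in place. The only point needing a moment's care is the last one: the natural chain of inequalities delivers the cube-rooted estimate with a factor $n$ (resp.\ $m$) rather than $n^3$ (resp.\ $m^3$), so one must explicitly invoke $|E|\le N^d$ (resp.\ $|S|\le N^d$) to recover the homogeneous-looking bound \eqref{eq: sumsofenergiesup} (resp.\ \eqref{eq: sumsofenergiesbackwardsup}) as stated; I would flag this so the reader is not puzzled by the apparent loss.
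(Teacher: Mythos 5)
Your proof is correct and is exactly the route the paper intends (the paper states this theorem immediately after Lemma \ref{lemma:weknowholder} with no written proof): combine $N^d \le |E|\,\Lambda^{\frac13}(S)$ from Theorem \ref{energyuncertaintyprincipletheorem}(i) with $\Lambda(S) \le n^3\sum_{i}\Lambda(S_i)$. Your flag about the exponent mismatch is well taken --- the natural conclusion is the stronger $N^d \le |E|\, n\,\bigl(\sum_i \Lambda(S_i)\bigr)^{\frac13}$, so \eqref{eq: sumsofenergiesup} as printed is almost certainly missing a cube root, and your patch via $|E|\le N^d$ (or, even more simply, via $n^3\sum_i\Lambda(S_i)\ge 1$ when $S\neq\emptyset$) legitimately recovers the stated form.
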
 

\vskip.125in 

\begin{remark} \label{remark:dominate} To see how Theorem \ref{energyuncertaintyprincipletheoremenhanced} may be useful, suppose that $\Lambda(S_1) \leq 2{|S_1|}^2$. Observe that 
$$ \sum_{i=2}^n \Lambda(S_i) \leq \sum_{i=2}^n {|S_i|}^3 \leq (n-1) \cdot {\left( \max_{2 \leq i \leq n} |S_i| \right)}^3.$$ 

It follows that if 
$$ (n-1) \cdot {\left( \max_{2 \leq i \leq n} |S_i| \right)}^3<{|S_1|}^2, $$ say, then we see that the additive energy of $S$ is primarily dictated by the additive energy of $S_1$, regardless of how much additive structure the other  sets $S_i$, $2\leq i\leq n$ may possess. 
\end{remark}

\section{The $L^1$ and $L^2$ minimization signal recovery algorithms}
\label{section:minimization}

\vskip.125in 

The following idea can be found in \cite{DS89} based on the classical Logan phenomenon \cite{L65}. The formal proof was first given by Santosa and Symes \cite{SS86}. Suppose that $f: {\mathbb Z}_N^d \to {\mathbb C}$ is a signal supported in $E \subset {\mathbb Z}_N^d$ and the frequencies ${\{\widehat{f}(m)\}}_{m \in S}$ are missing. Consider the optimization problem 

\[
\begin{aligned}
& \min_{u:\mathbb{Z}_N^d\to\mathbb{C}} && \|u\|_{L^2({\mathbb Z}_N^d)} \\
& \text{subject to} && \widehat{u}(m)=\widehat{f}(m) \quad \text{for all } m \notin S, \\
& && |\mathrm{supp}(u)| = |E|.
\end{aligned}
\]
 Let $g$ be the minimizer of the $L^2$-norm   among all functions 
$u$ 
satisfying the given constraints: 

 \begin{equation} \label{equation:L2minimizer} g = \text{argmin}_u {||u||}_{L^2({\mathbb Z}_N^d)} \end{equation} 

Let $f=g+h$. Observe that if $h$ is supported in $T$, then $|T| \leq 2|E|$. Also observe that $\widehat{h}$ is supported in $S$. We have 
\begin{align*}
    {||h||}^2_{L^2(T)}&=\sum_{x \in T} {\left| \sum_{m \in S} \chi(x \cdot m) \widehat{h}(m) \right|}^2\ 
 & \leq |T| \cdot |S| \cdot N^{-d} \cdot {||h||}_{L^2(T)} \leq 2|E| \cdot |S| \cdot N^{-d} \cdot {||h||}_{L^2(T)}, \end{align*}
 where the second line follows from the first by using Cauchy-Schwarz followed by Plancherel. 

If $|E| \cdot |S|<\frac{N^d}{2}$, we have ${||h||}^2_{L^2(T)}<{||h||}^2_{L^2(T)}$. This contradiction implies that $h$ is identically $0$. 

\vskip.125in 

In the $L^1$ case, define 
\begin{equation} \label{equation:L1minimizer} g = arg min_u {||u||}_{L^1({\mathbb Z}_N^d)} \ \text{with the constraint} \ \widehat{g}(m)=\widehat{f}(m) \ \text{for} \ m \notin S. \end{equation} 

Let $f=g+h$. Then 
\begin{align}\notag{||g||}_{L^1({\mathbb Z}_N^d)} &= {||f-h||}_{L^1({\mathbb Z}_N^d)} \\\notag
& = {||f-h||}_{L^1(E)}+{||f-h||}_{L^1(E^c)} = {||f-h||}_{L^1(E)}+{||h||}_{L^1(E^c)} \\\label{equation:keyminimizationstepL1} 
& \ge {||f||}_{L^1(E)}-{||h||}_{L^1(E)}+{||h||}_{L^1(E^c)}. \end{align}  

It follows that if 
$$ {||h||}_{L^1(E)}<{||h||}_{L^1(E^c)},$$ then 
$$ {||g||}_{L^1({\mathbb Z}_N^d)} > {||f||}_{L^1({\mathbb Z}_N^d)},$$ which would lead to a contradiction since $g$ is the minimizer. It would follow that $h \equiv 0$ and $g \equiv f$. 

\vskip.125in 

To prove that ${||h||}_{L^1(E)}<{||h||}_{L^1(E^c)}$, observe that 
\begin{align*}|h(x)|&=\left| \sum_{m \in S} \chi(x \cdot m) \widehat{h}(m) \right|\\
&\leq N^{-d} \cdot |S| \cdot {||h||}_{L^1({\mathbb Z}_N^d)}=N^{-d} \cdot |S| \cdot \left({||h||}_{L^1(E)}+{||h||}_{L^1(E^c)} \right).
\end{align*}

Summing both sides over $E$, we obtain 
$$ {||h||}_{L^1(E)} \leq \delta \cdot \left({||h||}_{L^1(E)}+{||h||}_{L^1(E^c)} \right).$$

Subtracting and distributing terms, we obtain 
$$ {||h||}_{L^1(E)} \leq \frac{\delta}{1-\delta} {||h||}_{L^1(E^c)}.$$

As before, if $\delta<\frac{1}{2}$ we obtain ${||h||}_{L^1(E)}<{||h||}_{L^1(E^c)}$, which implies that $${||g||}_{L^1({\mathbb Z}_N)}>{||f||}_{L^1({\mathbb Z}_N)}.$$ This is a contradiction since $g$ is the $L^1$ minimizer. It follows that $h \equiv 0$ and we have shown that $f=g$. 

\vskip.125in 

Using Logan's method (see \ref{equation:L1minimizer} above), we present a $L^1$ minimization result that grants a stronger recovery condition when the missing frequency set has low additive energy. We also offer a $L^2$ minimization result which we derive using a different setup. 

\begin{theorem} \label{theorem:L1minimization} Let $f: {\mathbb Z}_N^d \to {\mathbb C}$ supported in $E \subset {\mathbb Z}_N^d$. Suppose that the frequencies ${\{\widehat{f}(m)\}}_{m \in S}$ are unobserved, $S \subset {\mathbb Z}_N^d$. Suppose that there exists $C>0$ and $\alpha \in [2,3]$ such that 
\begin{equation} \label{equation:regularenergy} \Lambda(S) \leq C{|S|}^{\alpha}. \end{equation} 

Let 
\begin{equation} \label{equation:L1threshold} \delta={\left( \frac{|E|\Lambda^{\frac{1}{3}}(S)}{N^d} \right)}^{\frac{3}{4}}. \end{equation} 

Then if 
$$ \delta<\frac{1}{2},$$ and $g$ is the $L^1$ minimizer as in \eqref{equation:L1minimizer}, then $f \equiv g$. 

\end{theorem}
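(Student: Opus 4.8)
The plan is to run the Logan--Santosa--Symes argument already presented above (the one culminating in the chain \eqref{equation:keyminimizationstepL1}), replacing only the crude pointwise bound $|h(x)|\le N^{-d}|S|\,\|h\|_{L^1(\mathbb Z_N^d)}$ by a sharper one that sees the additive energy of $S$. Set $h=f-g$; since $g$ satisfies the constraint in \eqref{equation:L1minimizer}, $\widehat h$ is supported in $S$, and since $f$ is supported in $E$ we have $\|f\|_{L^1(\mathbb Z_N^d)}=\|f\|_{L^1(E)}$. As in \eqref{equation:keyminimizationstepL1}, it suffices to prove that $\|h\|_{L^1(E)}<\|h\|_{L^1(E^c)}$ whenever $h\not\equiv 0$: that inequality forces $\|g\|_{L^1}>\|f\|_{L^1}$, contradicting the minimality of $g$, so $h\equiv 0$ and $f=g$.

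The first step is to rewrite $h$ as a convolution. Because $\widehat h$ vanishes off $S$, Fourier inversion plus the definition of the DFT give
\[ h(x)=\sum_{m\in S}\chi(x\cdot m)\widehat h(m)=\sum_{y\in\mathbb Z_N^d}h(y)\,\phi(x-y),\qquad \phi(z):=N^{-d}\sum_{m\in S}\chi(z\cdot m)=\widehat S(-z), \]
so $|h(x)|\le\sum_{y}|h(y)|\,|\phi(x-y)|$ for every $x$. Summing this over $x\in E$, applying H\"older's inequality with conjugate pair $(4/3,4)$ to the inner sum $\sum_{x\in E}1\cdot|\phi(x-y)|$, and then enlarging that sum to all of $\mathbb Z_N^d$, one obtains
\[ \|h\|_{L^1(E)}\le |E|^{3/4}\Big(\sum_{z\in\mathbb Z_N^d}|\phi(z)|^4\Big)^{1/4}\,\|h\|_{L^1(\mathbb Z_N^d)} = |E|^{3/4}\Big(\sum_{z\in\mathbb Z_N^d}|\widehat S(z)|^4\Big)^{1/4}\,\|h\|_{L^1(\mathbb Z_N^d)}. \]

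Next I invoke the key identity $\Lambda(S)=N^{3d}\sum_m|\widehat S(m)|^4$ (the same one underlying Theorem \ref{energyuncertaintyprincipletheorem} and Lemma \ref{lemma:weknowholder}), which says $\sum_z|\widehat S(z)|^4=N^{-3d}\Lambda(S)$. Substituting, the prefactor above becomes exactly $\big(|E|^{3}N^{-3d}\Lambda(S)\big)^{1/4}=\big(|E|\Lambda^{1/3}(S)/N^d\big)^{3/4}=\delta$, so $\|h\|_{L^1(E)}\le\delta\big(\|h\|_{L^1(E)}+\|h\|_{L^1(E^c)}\big)$, i.e. $\|h\|_{L^1(E)}\le\frac{\delta}{1-\delta}\,\|h\|_{L^1(E^c)}$. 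Since $\delta<1/2$ makes $\frac{\delta}{1-\delta}<1$, either $\|h\|_{L^1(E^c)}=0$, in which case the inequality gives $\|h\|_{L^1(E)}=0$ and hence $h\equiv 0$, or $\|h\|_{L^1(E^c)}>0$ and then $\|h\|_{L^1(E)}<\|h\|_{L^1(E^c)}$ strictly; either way the Logan argument closes and $f\equiv g$.

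I expect the only delicate part to be the normalization bookkeeping --- in particular verifying $\phi(z)=\widehat S(-z)$ and hence $\sum_z|\phi(z)|^4=N^{-3d}\Lambda(S)$ in this paper's conventions, and then checking that the resulting constant collapses precisely to $\delta$ --- while the combinatorial content (H\"older with $(4/3,4)$ followed by the energy identity) is identical to the binary Direct Recovery computation preceding the first Proposition. I would also remark that the regularity hypothesis $\Lambda(S)\le C|S|^{\alpha}$ with $\alpha\in[2,3]$ plays no role in this implication as stated (it holds for every set with $\alpha=3$, $C=1$) and is used only to repackage the threshold $\delta<1/2$ into the user-friendly sufficient condition $|E|\,C^{1/3}|S|^{\alpha/3}<N^{d}/2^{4/3}$, mirroring the Proposition in Section \ref{tssection}.
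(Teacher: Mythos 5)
Your proof is correct and follows essentially the same route as the paper: the Logan contradiction scheme from \eqref{equation:keyminimizationstepL1} combined with a H\"older $(4/3,4)$ step that brings in $\Lambda(S)$ and collapses exactly to the prefactor $\delta$, after which $\delta<\tfrac12$ forces $h\equiv 0$. The only cosmetic difference is that you bound the $\ell^4$ norm of the kernel $\widehat S$ via the identity $\Lambda(S)=N^{3d}\sum_m|\widehat S(m)|^4$, whereas the paper bounds $\|h\|_4$ directly by counting the quadruples $m+l=m'+l'$ in $S^4$ and using $|\widehat h(m)|\le N^{-d}\|h\|_{L^1}$; these are two phrasings of the same computation, and your closing observation that the hypothesis $\Lambda(S)\le C|S|^\alpha$ is not actually used in this theorem is accurate (the paper's proof does not use it either).
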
  

\begin{theorem} \label{theorem:L2minimization} Let $f: {\mathbb Z}_N^d \to {\mathbb C}$ supported in $E \subset {\mathbb Z}_N^d$. Suppose that the frequencies ${\{\widehat{f}(m)\}}_{m \in S}$ are unobserved, $S \subset {\mathbb Z}_N^d$. Suppose that there exists $C>0$ and $\alpha \in [2,3]$ such that 
\begin{equation} \label{equation:nestedenergy} \Lambda(U) \leq C{|U|}^{\alpha} \ \text{for all} \ U \subset S. \end{equation} 

For any $\beta>\alpha$, let 
\begin{equation} \label{equation:L2threshold} \delta_{\beta}={\left( \frac{|E| \cdot {|S|}^{\beta-2}}{N^d} \right)}^{\frac{1}{4}}. \end{equation}

Then if 
$$ \delta_{\beta}<\frac{1}{2\left(17+\frac{1}{1-2^{-(\beta-\alpha)}} \right) \cdot {(2C)}^{\frac{1}{4}}},$$ and $g$ is the $L^2$ minimizer as in \eqref{equation:L2minimizer}, then $f \equiv g$. \end{theorem}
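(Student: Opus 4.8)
The plan is to prove that the feasible set of \eqref{equation:L2minimizer} consists of the single point $f$. Since $f$ is itself feasible, the minimizer $g$ is then forced to equal $f$, and $L^2$-minimality enters only through the fact that $g$ exists and lies in the feasible set. Write $h=f-g$. The two feasibility constraints say precisely that $\widehat h$ is supported in $S$ (as $f$ and $g$ share the observed frequencies) and that $|T|\le 2|E|$, where $T:=\text{supp}(h)\subseteq\text{supp}(f)\cup\text{supp}(g)$. So it suffices to show: whenever $\widehat h$ is supported in $S$, $|\text{supp}(h)|\le 2|E|$, and $\delta_\beta$ satisfies the bound in the theorem, then $h\equiv 0$. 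The mechanism will be to estimate $\|h\|_{L^2(T)}$ by a constant strictly less than $1$ times $\|h\|_{L^2(\mathbb{Z}_N^d)}$; since $h$ vanishes outside $T$ these two norms coincide, which forces $\|h\|_{L^2}=0$.

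The estimate rests on the elementary identity — a restatement, for an arbitrary $G:\mathbb{Z}_N^d\to\mathbb{C}$, of the character-orthogonality computation behind $\Lambda(S)=N^{3d}\sum_m|\widehat S(m)|^4$ —
\[
\sum_{x\in\mathbb{Z}_N^d}\Bigl|\,\sum_{m}\chi(x\cdot m)G(m)\Bigr|^{4}=N^{d}\!\!\!\sum_{m_1+m_2=m_3+m_4}\!\!\!G(m_1)G(m_2)\overline{G(m_3)G(m_4)},
\]
so that if $G$ is supported in some $U\subseteq S$ and $\|G\|_\infty\le\lambda$ then the left side is at most $N^d\lambda^4\Lambda(U)\le CN^d\lambda^4|U|^{\alpha}$ by \eqref{equation:nestedenergy}. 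I would apply this one dyadic scale at a time: let $M=\|\widehat h\|_\infty$, put $S_j=\{m\in S:2^{-j-1}M<|\widehat h(m)|\le 2^{-j}M\}$, and let $h_j$ be the inverse Fourier transform of $\widehat h\cdot\mathbf{1}_{S_j}$, so that $h=\sum_{j\ge 0}h_j$. For each $j$, Hölder's inequality on $T$ gives $\|h_j\|_{L^2(T)}\le|T|^{1/4}\|h_j\|_{L^4(\mathbb{Z}_N^d)}$, while the identity (with $G=\widehat h\,\mathbf{1}_{S_j}$, $\lambda=2^{-j}M$) gives $\|h_j\|_{L^4}^4\le CN^d(2^{-j}M)^4|S_j|^{\alpha}$; combined with $|T|\le 2|E|$ this yields the per-scale bound
\[
\|h_j\|_{L^2(T)}\le (2C)^{1/4}\,|E|^{1/4}\,N^{d/4}\,(2^{-j}M)\,|S_j|^{\alpha/4}.
\]

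It remains to sum over $j$. The product $(2^{-j}M)^2|S_j|$ is controlled scale by scale by the Plancherel budget $\sum_j\sum_{m\in S_j}|\widehat h(m)|^2=N^{-d}\|h\|_{L^2}^2$, and one always has $|S_j|\le|S|$. Interpolating between these two bounds on $|S_j|$ with a weight chosen so that the power of $|S|$ produced is exactly $|S|^{\beta-2}$ — this is where $\alpha<\beta$ is used, the exponent $\beta-\alpha$ being the amount one must ``pay'' — the decay in $j$ that remains is geometric with ratio $2^{-(\beta-\alpha)}$, so the ``bulk'' scales contribute a factor $\frac{1}{1-2^{-(\beta-\alpha)}}$ and the finitely many ``transition'' scales, together with the Hölder, identity, and interpolation constants, contribute a bounded amount absorbed into the numerical constant $17$. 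This gives
\[
\|h\|_{L^2(T)}\le\left(17+\frac{1}{1-2^{-(\beta-\alpha)}}\right)(2C)^{1/4}\left(\frac{|E|\,|S|^{\beta-2}}{N^d}\right)^{1/4}\|h\|_{L^2}=\left(17+\frac{1}{1-2^{-(\beta-\alpha)}}\right)(2C)^{1/4}\,\delta_\beta\,\|h\|_{L^2}.
\]
By hypothesis the multiplicative constant here is $<\tfrac12$, so $\|h\|_{L^2}=0$, hence $h\equiv 0$ and $f\equiv g$.

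The main obstacle is precisely this summation. The additive-energy assumption only hands us the exponent $\alpha\in[2,3]$, and $\alpha=2$ (respectively $\alpha>2$) is exactly the borderline at which the naive series $\sum_j\|h_j\|_{L^2(T)}$ loses a logarithm in $|S|$ (respectively diverges). The way out is to spend a power $|S|^{\beta-\alpha}$ to buy geometric convergence, which requires interpolating the two bounds on $|S_j|$ at each scale and then carefully tracking the resulting constants and the handful of boundary scales; extracting the clean form $17+\frac{1}{1-2^{-(\beta-\alpha)}}$ from this is the only delicate part. It is exactly to make the per-scale estimate legitimate that \eqref{equation:nestedenergy} is imposed on every subset $U\subseteq S$, not merely on $S$ itself.
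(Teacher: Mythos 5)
Your proposal follows essentially the same route as the paper: reduce to showing that any $h$ with $\mathrm{supp}(\widehat h)\subset S$ and $|\mathrm{supp}(h)|\le 2|E|$ must vanish, bound $\|h\|_{L^2(T)}$ via H\"older and the $L^4$ norm, decompose $\widehat h$ into dyadic level sets, apply the energy hypothesis \eqref{equation:nestedenergy} scale by scale, and sum by interpolating between the Plancherel budget and the trivial bound $|S_j|\le|S|$, spending the slack $\beta-\alpha$ to obtain geometric convergence --- this is precisely the content of the paper's Lemma \ref{lemma:restrictedstrongtype} with its level sets $U_j$ and its splitting into the tail $j\ge M$ and the two sub-sums $I$ and $II$. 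The only differences are organizational (you go directly to the dyadic decomposition rather than first treating the case where $\widehat h$ is an indicator) and the fact that you defer the constant-tracking that produces $17+\frac{1}{1-2^{-(\beta-\alpha)}}$, which is exactly the step the paper carries out in detail.
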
 

\vskip.25in 

\section{Proof of Theorem \ref{energyuncertaintyprincipletheorem}}
\label{proofsection}

\vskip.25in 
\begin{proof}[Proof of Theorem \ref{energyuncertaintyprincipletheorem}] We first prove part i) Let $f:\mathbb{Z}_N^d \rightarrow \mathbb{C}$ with support in $E$ and $supp(\widehat{f})=\Sigma$. So 
\[|f(x)|=\left|\sum_{m\in\Sigma}\chi(x\cdot m)\widehat{f}(m)\right|
\leq| \Sigma|^{\frac{3}{4}} {\left(\sum_{m\in\mathbb{Z}_N^d}|\widehat{f}(m)|^4 \right)}^{1/4}\cdot\]
With Fourier Inversion, we get
\[=|\Sigma|^{\frac{3}{4}}N^{-d} {\left(\sum_{m\in\mathbb{Z}_N^d} {\left|\sum_{x\in E}\chi(-x\cdot m)f(x)\right|}^4 \right)}^{\frac{1}{4}}.\]
By expanding in $x_{1}, x_{2}, x_{3}, x_{4}$ 
\begin{align*}
&=|\Sigma|^{\frac{3}{4}}N^{-d} {\left(\sum_{x_1,x_2,x_3,x_4\in E}\sum_{m\in\mathbb{Z}_N^d}\chi((x_3+x_4-x_1-x_2)\cdot m)f(x_1)f(x_2)\overline{f(x_3)f(x_4)}\right)}^{\frac{1}{4}} \\ 
&=|\Sigma|^{\frac{3}{4}} N^{{-\frac{3d}{4}}} {\left(\sum_{x_1+x_2=x_3+x_4}f(x_1)f(x_2)\overline{f(x_3)f(x_4)}\right)}^{\frac{1}{4}}\\&\leq|\Sigma|^{\frac{3}{4}}N^{\frac{3d}{4}} \Lambda^{\frac{1}{4}}(E)||f||_{\infty}.
\end{align*}

Since this bound on  $f(x)$ is true for all x, it is also true for the maximum: 
\[||f||_{\infty}\leq|\Sigma|^{\frac{3}{4}}N^{{-\frac{3d}{4}}}\Lambda^{\frac{1}{4}}(E)||f||_{\infty}.\]  
It follows that if $f$ is not identically $0$, the preceding bound implies that  
$$N^d\leq|\Sigma|\Lambda^{\frac{1}{3}}(E).$$
Starting with $|\widehat{f}(m)|$ instead of $|f(x)|$, we get a symmetric result, which we scale to yield our theorem. This completes the proof of part i). 
    
\vskip.125in 
    
We now prove Part ii). By (\ref{explicitconstantsrestrictionequation}), 
$${\left( \frac{1}{|\Sigma|} \sum_{m \in \Sigma} {|\widehat{f}(m)|}^2 \right)}^{\frac{1}{2}} \leq {\left(\frac{|\Sigma|}{N^{\frac{d}{2}}} \right)}^{-\frac{1}{2}} \cdot {\left( \max_{U \subset \Sigma} \frac{\Lambda(U)}{{|U|}^2} \right)}^{\frac{1}{4}} \cdot N^{-d} \cdot {\left( \sum_{x \in E} {|f(x)|}^{\frac{4}{3}} \right)}^{\frac{3}{4}}.$$    

It follows that 

$$ {\left( \sum_{m \in \Sigma} {|\widehat{f}(m)|}^2 \right)}^{\frac{1}{2}} \leq {|\Sigma|}^{\frac{1}{2}} \cdot {\left(\frac{|\Sigma|}{N^{\frac{d}{2}}} \right)}^{-\frac{1}{2}} \cdot {\left( \max_{U \subset \Sigma} \frac{\Lambda(U)}{{|U|}^2} \right)}^{\frac{1}{4}} \cdot N^{-d} \cdot {\left( \sum_{x \in E} {|f(x)|}^{\frac{4}{3}} \right)}^{\frac{3}{4}}.$$
Since $\widehat{f}$ is supported in $\Sigma$, we can apply the Plancherel identity to  obtain 
$$ N^{-\frac{d}{2}} {\left( \sum_{x \in E} {|f(x)|}^2 \right)}^{\frac{1}{2}} \leq {|\Sigma|}^{\frac{1}{2}} \cdot {\left(\frac{|\Sigma|}{N^{\frac{d}{2}}} \right)}^{-\frac{1}{2}} \cdot {\left( \max_{U \subset \Sigma} \frac{\Lambda(U)}{{|U|}^2} \right)}^{\frac{1}{4}} \cdot N^{-d} \cdot {\left( \sum_{x \in E} {|f(x)|}^{\frac{4}{3}} \right)}^{\frac{3}{4}}.$$

Applying H\"older's inequality, we obtain 
$$ N^{-\frac{d}{2}} {\left( \sum_{x \in E} {|f(x)|}^2 \right)}^{\frac{1}{2}} \leq {|\Sigma|}^{\frac{1}{2}} \cdot {\left(\frac{|\Sigma|}{N^{\frac{d}{2}}} \right)}^{-\frac{1}{2}} \cdot {\left( \max_{U \subset \Sigma} \frac{\Lambda(U)}{{|U|}^2} \right)}^{\frac{1}{4}} \cdot {|E|}^{\frac{1}{4}} \cdot N^{-d} \cdot {\left( \sum_{x \in E} {|f(x)|}^2 \right)}^{\frac{1}{2}}.$$

Since $f\neq 0$, the preceding inequality  leads to 
$$ N^{\frac{d}{4}} \leq {\left( \max_{U \subset \Sigma} \frac{\Lambda(U)}{{|U|}^2} \right)}^{\frac{1}{4}} \cdot {|E|}^{\frac{1}{4}},$$ and it follows that 
$$ N^d \leq |E| \cdot \max_{U \subset \Sigma} \frac{\Lambda(U)}{{|U|}^2}.$$

Exactly the same argument with $f$ replaced by $\widehat{f}$ and $\Sigma$ replaced by $E$ yields 
$$ N^d \leq |S| \cdot  \max_{F \subset E} \frac{\Lambda(F)}{{|F|}^2}.$$

Combining the two estimates yields the conclusion of Part ii), and the proof  of Theorem \ref{energyuncertaintyprincipletheorem} is complete.
\end{proof}
 
\section{Proof of Theorem \ref{theorem:L1minimization}}
\label{section:L1minimizationproof}

Following the classical argument presented before the proof of Theorem \ref{theorem:L1minimization}, namely (\ref{equation:keyminimizationstepL1}), we see that 
\begin{equation} \label{equation:setupL1} {||g||}_{L^1({\mathbb Z}_N^d)} \ge {||f||}_{L^1({\mathbb Z}_N^d)}- {||h||}_{L^1(E)} + {||h||}_{L^1(E^c)}, \end{equation} where $g$ is as in (\ref{equation:L1minimizer}) and $h=f-g$. Observe that $\widehat{h}$ is supported in $S$. We must show that 
\begin{equation} \label{equation:keycontradictioninequalityL1} {||h||}_{L^1(E)}<{||h||}_{L^1(E^c)}. \end{equation} 

This will result in a contradiction as it will imply that 
$$ {||g||}_1>{||f||}_1,$$ which is impossible since $g$ is the $L^1$-minimizer. 

\vskip.125in 

To prove (\ref{equation:keycontradictioninequalityL1}), observe that by H\"older's inequality, 
 \begin{align*}{||h||}_{L^1(E)} &\leq 
 % {|E|}^{\frac{3}{4}} \cdot {||h||}_{L^4(E)} 
  {|E|}^{\frac{3}{4}} \cdot {||h||}_4\\&
  = {|E|}^{\frac{3}{4}} \cdot {\left( \sum_x \sum_{m,l,m',l'} \chi(x \cdot (m+l-m'-l')) \overline{\widehat{h}(m)} \overline{\widehat{h}(l)} \widehat{h}(m') \widehat{h}(l')  \right)}^{\frac{1}{4}}\\ 
& \leq {|E|}^{\frac{3}{4}} \cdot N^{-\frac{3d}{4}} \cdot \Lambda^{\frac{1}{4}}(S) \cdot {||h||}_1 =\delta_{\alpha}{||h||}_1. 
\end{align*}

It follows that 
$$ {||h||}_{L^1(E)} \leq \delta_{\alpha} \cdot \left( {||h||}_{L^1(E)}+{||h||}_{L^1(E^c)} \right), $$ which implies that 
$$ {||h||}_{L^1(E)} \leq \frac{\delta_{\alpha}}{1-\delta_{\alpha}} \cdot {||h||}_{L^1(E^c)},$$ and we conclude that 
$$ {||h||}_{L^1(E)}<{||h||}_{L^1(E^c)}$$ if $\delta_{\alpha}<\frac{1}{2}$. 

Plugging this into (\ref{equation:setupL1}) shows that ${||g||}_1>{||f||}_1$, which is a contradiction, since $g$ is the $L^1$ minimizer. It follows that $h \equiv 0$, so $f \equiv g$, as desired. 

\vskip.25in 

\section{Proof of Theorem \ref{theorem:L2minimization}}
\label{section:L2minimizationproof}

\vskip.125in 

As before, note that if $h$ is supported in $T$, then $|T| \leq 2|E|$, and $\widehat{h}$ is supported in $S$. We have 
\begin{align*} 
  {||h||}_{L^2(T)} &
  \leq
  {|T|}^{\frac{1}{4}} \cdot {||h||}_{L^4(T)} 
  \leq {|T|}^{\frac{1}{4}} \cdot {||h||}_{L^4({\mathbb Z}_N^d)}
  \\
  &
  ={|T|}^{\frac{1}{4}} \cdot N^{\frac{d}{4}} \cdot {\left( \sum_{m+l=m'+l'; m,l,m',l' \in S} \overline{\widehat{h}(m)} \overline{\widehat{h}(l)} \widehat{h}(m') \widehat{h}(l') \right)}^{\frac{1}{4}}. 
  \end{align*}

If $\widehat{h}(m)=U(m)$, $U \subset S$, then this expression is equal to 
\begin{align*}
{|T|}^{\frac{1}{4}} \cdot N^{\frac{d}{4}} \cdot \Lambda^{\frac{1}{4}}(U) &={|T|}^{\frac{1}{4}} \cdot N^{\frac{d}{4}} \cdot \Lambda^{\frac{1}{4}}(U){|U|}^{-\frac{1}{2}} \cdot {|U|}^{\frac{1}{2}}\\
 &= {|T|}^{\frac{1}{4}} \cdot N^{\frac{d}{4}} \cdot \Lambda^{\frac{1}{4}}(U){|U|}^{-\frac{1}{2}} \cdot {||\widehat{h}||}_{L^2({\mathbb Z}_N^d)} \\
 &={|T|}^{\frac{1}{4}} \cdot \Lambda^{\frac{1}{4}}(U){|U|}^{-\frac{1}{2}} \cdot N^{-\frac{d}{4}} \cdot {||h||}_{L^2(T)},
\end{align*}

and we conclude that 
 \begin{align}\notag {||h||}_{L^4(T)} &\leq \Lambda^{\frac{1}{4}}(U){|U|}^{-\frac{1}{2}} \cdot N^{-\frac{d}{4}} \cdot {||h||}_{L^2(T)}
\\\label{equation:keyenergysetup} 
& \leq C^{\frac{1}{4}} \cdot N^{-\frac{d}{4}} \cdot {|U|}^{\frac{\alpha}{4}-\frac{1}{2}} \cdot {||h||}_{L^2(T)} \end{align} 

when $\widehat{h}(m)=U(m)$, $U \subset S$. 
(Recall that $U(m)$ demotes the  indicator function of the set $U$.)
\vskip.125 in 

\begin{lemma} \label{lemma:restrictedstrongtype} The inequality \eqref{equation:keyenergysetup} holds with an additional multiplicative factor of $C_{\alpha, \beta}=2(17+\frac{1}{1-2^{-(\beta-\alpha)}})$ for any $h$ such that $h$ is supported in $T$ and $\widehat{h}$ is supported in $S$.
\end{lemma}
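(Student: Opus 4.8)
The goal is to upgrade the restricted estimate \eqref{equation:keyenergysetup}, which was derived only for indicator functions $\widehat{h}=U(m)$, to an arbitrary $h$ supported in $T$ with $\widehat{h}$ supported in $S$, at the cost of the explicit constant $C_{\alpha,\beta}=2(17+\frac{1}{1-2^{-(\beta-\alpha)}})$. The natural strategy is a dyadic decomposition of the frequency profile of $\widehat{h}$ combined with the ``good energy / bad energy'' philosophy encoded in Lemma \ref{lemma:weknowholder}.

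The plan is as follows. First I would normalize so that ${||h||}_{L^2(T)}=1$, equivalently ${||\widehat h||}_{L^2}=N^{-d/2}$. Write $S=\bigcup_{j\ge 0} S_j$ where $S_j=\{m\in S: 2^{-j-1}M<|\widehat h(m)|\le 2^{-j}M\}$ with $M=\max_m |\widehat h(m)|$ (plus a tail set where $|\widehat h(m)|$ is negligibly small, say below $N^{-Cd}M$, which contributes at most $o(1)$ to every relevant sum and can be discarded). On each $S_j$ we have $|\widehat h(m)|\approx 2^{-j}M$, so $\widehat h$ restricted to $S_j$ is comparable to $2^{-j}M$ times the indicator of $S_j$; Plancherel gives $(2^{-j}M)^2|S_j|\lesssim N^{-d}$, i.e. $|S_j|\lesssim N^{-d}2^{2j}M^{-2}$. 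Next I would apply the indicator-case estimate \eqref{equation:keyenergysetup} to each piece: writing $h_j$ for the function with $\widehat{h_j}=\widehat h\cdot S_j$, one gets ${||h_j||}_{L^4(T)}\lesssim 2^{-j}M\cdot C^{1/4}N^{-d/4}|S_j|^{\alpha/4}$ (the $2^{-j}M$ factor being the amplitude, $|S_j|^{\alpha/4}$ coming from $\Lambda(S_j)\le C|S_j|^\alpha$, valid since $S_j\subset S$ by hypothesis \eqref{equation:nestedenergy}). Then sum in $j$ using the triangle inequality in $L^4(T)$: $\,{||h||}_{L^4(T)}\le \sum_j {||h_j||}_{L^4(T)}$.

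The main point is to carry out the summation in $j$ so that the series converges with the claimed constant. Substituting the bound $|S_j|\lesssim N^{-d}2^{2j}M^{-2}$ into $|S_j|^{\alpha/4}$ would give a factor $2^{j\alpha/2}M^{-\alpha/2}$, which combined with the prefactor $2^{-j}M$ yields $2^{j(\alpha/2-1)}$; since $\alpha\ge 2$ this does not decay, so one cannot simply use the $L^2$-based bound on $|S_j|$ on every scale. The fix is to interpolate between two bounds for $|S_j|$: the Plancherel bound $|S_j|\lesssim N^{-d}2^{2j}M^{-2}$ which is good for large $j$ (small amplitudes), and the trivial bound $|S_j|\le |S|$ which is good for small $j$. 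Equivalently, split the sum at the scale $j_0$ where $N^{-d}2^{2j_0}M^{-2}\approx |S|$. For $j\le j_0$ use $|S_j|\le |S|$ and the hypothesis $\Lambda(S_j)\le C|S|^\alpha$ combined with the amplitude decay $2^{-j}M$ — wait, that still grows; the correct device is to exploit $\beta>\alpha$: use $|S_j|^{\alpha/4}=|S_j|^{\beta/4}\cdot|S_j|^{(\alpha-\beta)/4}$ and bound $|S_j|^{(\alpha-\beta)/4}$ by a constant times $|S|^{(\alpha-\beta)/4}$ only when useful. Concretely I expect the clean argument to be: for each $j$ combine $\Lambda(S_j)\le C|S_j|^\alpha$ with $|S_j|\le \min(|S|,\,2^{2j}N^{-d}M^{-2})$, getting a geometric series in $j$ with ratio $2^{-(\beta-\alpha)/\text{const}}$ after one inserts the definition $\delta_\beta$ in terms of $|S|^{\beta-2}$; the two regimes $j\lessgtr j_0$ each sum geometrically, the large-$j$ regime producing the factor $\frac{1}{1-2^{-(\beta-\alpha)}}$ and the small-$j$ regime plus the endpoint bookkeeping producing the additive constant $17$ (this is where the somewhat arbitrary-looking $17$ comes from — crude bounds on finitely many boundary terms and on the contribution of the scale $j_0$ itself). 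Finally, collecting everything, ${||h||}_{L^4(T)}\le C_{\alpha,\beta}\cdot C^{1/4}N^{-d/4}|S|^{\alpha/4-1/2}{||h||}_{L^2(T)}$, which is exactly \eqref{equation:keyenergysetup} with the claimed extra factor once one replaces $|S|^{\alpha/4-1/2}$ by $|U|^{\alpha/4-1/2}$ under the monotonicity built into the $\beta$-parameter (the switch from $\alpha$ to $\beta$ in the exponent of $|S|$ is precisely what makes room for the geometric summation, and is why $\delta_\beta$ rather than $\delta_\alpha$ appears in the theorem).

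The step I expect to be the real obstacle is the bookkeeping in the dyadic sum: making the two regimes match up at $j_0$ without losing a factor that grows with $N$, and tracking the constants honestly enough to land on $2(17+\frac{1}{1-2^{-(\beta-\alpha)}})$ rather than something worse. In particular one must be careful that the Plancherel bound $(2^{-j}M)^2|S_j|\le N^{-d}$ and the a priori bound $M\le {||\widehat h||}_\infty$ interact correctly — there is a mild subtlety that $M$ itself is controlled by $|S|^{1/2}N^{-d/2}$ via Cauchy–Schwarz on $S$, and this is what lets the small-$j$ (large-amplitude) terms be absorbed into a bounded number of scales. Once the scale count near $j_0$ is seen to be $O(1)$ (independent of $N$), the additive constant is forced and the rest is the geometric tail.
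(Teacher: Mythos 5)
Your overall strategy matches the paper's: normalize in $L^2$, decompose $\widehat h$ into dyadic amplitude level sets $U_j$, reduce to bounding $N^{d/4}\sum_j 2^{-j}\Lambda^{1/4}(U_j)\le C^{1/4}N^{d/4}\sum_j 2^{-j}|U_j|^{\alpha/4}$, split the scale sum at $2^{M}\approx |S|^{1/2}$, and use the room $\beta>\alpha$ to sum geometrically. (Your use of the triangle inequality in $L^4$ on the pieces $h_j$ in place of the paper's multilinear expansion and H\"older bound on cross-level quadruple counts is a harmless cosmetic difference: both land on the same single sum over $j$.) The tail $j\ge M$ is also handled the same way, via $|U_j|\le |S|$ and the geometric decay of $2^{-j}$.

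The genuine gap is in the regime $j<M$, which is exactly the step you flag as the obstacle. Two problems. First, your plan uses the single-scale Plancherel bound $2^{-2j}|S_j|\lesssim \|\widehat h\|_2^2$ separately on each scale; feeding that into $2^{-j}|S_j|^{\alpha/4}$ gives $2^{j(\alpha/2-1)}$, and summing up to $j_0$ yields a constant that blows up as $\alpha\to 2^+$ and a $\log|S|$ loss at $\alpha=2$ (the endpoint the theorem allows). The paper avoids this by a Cauchy--Schwarz \emph{in the scale index}: $\sum_{j<M}2^{-j}|U_j|^{\alpha/4}\le \bigl(\sum_j 2^{-2j}|U_j|\bigr)^{1/2}\bigl(\sum_{j<M}|U_j|^{(\alpha-2)/2}\bigr)^{1/2}$, so the full $\ell^2$ mass of $\widehat h$ is extracted once rather than scale by scale. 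Second, your device for exploiting $\beta>\alpha$ --- writing $|S_j|^{\alpha/4}=|S_j|^{\beta/4}|S_j|^{(\alpha-\beta)/4}$ and ``bounding $|S_j|^{(\alpha-\beta)/4}$ by a constant times $|S|^{(\alpha-\beta)/4}$'' --- goes the wrong way: since $\alpha-\beta<0$, that inequality would require $|S_j|\ge |S|$. The trick needs a \emph{lower} bound on $|S_j|$, which does not hold on every scale; the paper therefore splits the remaining sum $\sum_{j<M}|U_j|^{(\alpha-2)/2}$ into the scales with $|U_j|\ge 2^{2j}$ (where $|U_j|^{(\alpha-\beta)/2}\le 2^{-j(\beta-\alpha)}$ gives the geometric series producing $\tfrac{1}{1-2^{-(\beta-\alpha)}}$) and the scales with $|U_j|<2^{2j}$ (bounded directly by $\sum_j 2^{j(\alpha-2)}\lesssim (16|S|)^{(\alpha-2)/2}$, which contributes the additive constant). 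Without this two-case split on the \emph{size of the level set} --- distinct from your split on the scale $j$ itself --- the argument does not close with a constant independent of $N$ and uniform down to $\alpha=2$.
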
 

Assuming the lemma for the moment, we see that by the energy assumptions in Theorem \ref{theorem:L2minimization},
$$ {||h||}_{L^2(T)} \leq 2\left( 17+\frac{1}{1-2^{-(\beta-\alpha)}} \right) \cdot 2^{\frac{1}{4}} \cdot {|E|}^{\frac{1}{4}} \cdot C^{\frac{1}{4}} \cdot {|U|}^{\frac{\alpha-2}{4}} \cdot N^{-\frac{d}{4}} {||h||}_{L^2(T)}=3 \cdot {(2C)}^{\frac{1}{4}} \cdot \delta_{\alpha} \cdot {||h||}_{L^2(T)}.$$

By assumption, 
$$ 2\left( 17+\frac{1}{1-2^{-(\beta-\alpha)}} \right) \cdot {(2C)}^{\frac{1}{4}} \cdot \delta_{\alpha}<1,$$ so we obtain 
$$ {||h||}_{L^2(T)}<{||h||}_{L^2(T)},$$ which is a contradiction. We conclude that $h \equiv 0$, hence $f \equiv g$. 

\vskip.125in 

We are left to prove Lemma \ref{lemma:restrictedstrongtype}. This is a variant of the argument in \cite{KP15}, pages 11–12.
\vskip.125in
Note that since we may assume that $h$ is nonzero, we may scale $N^{-\frac{d}{2}}||h||_{L^{2}{(\mathbb{Z}_{N}^{d})}}$ to 1. This scaling additionally implies that 
$|\widehat{h}(m)|\leq 1$. We can prove this using the definition of the Fourier transform and the Cauchy-Schwartz inequality.
$$|\widehat{h}(m)|=|N^{-d}\sum_{x} \chi(-x\cdot m)h(x)|\leq N^{-\frac{d}{2}}\left(\sum_{x}|h(x)|^{2}\right)^\frac{1}{2}=1$$
\vskip.125in
Observe that we may assume that $\widehat{h}$ is real-valued. Define 
$$ U_j=\left\{m \in {\mathbb Z}_N^d: 2^{-j} \leq |\widehat{h}(m)| \leq 2^{-j+1} \right\}. $$

We need another simple observation. Under the assumptions of Theorem \ref{theorem:L2minimization}, 
\begin{equation} \label{equation:multienergy} \left|\{(m,l,m',l') \in U \times V \times U' \times V': m+l=m'+l' \}\right| \leq {(\Lambda(U)\Lambda(V)\Lambda(U')\Lambda(V'))}^{\frac{1}{4}}, \end{equation} where $U,V,U',V' \subset S$ and $C$ is the same as in (\ref{equation:nestedenergy}). To see this, let $f_U, f_V, f_{U'}, f_{V'}$ denote the inverse Fourier transforms of $U,V,U',V'$, respectively. Then the left-hand side of (\ref{equation:multienergy}) is equal to 
$$ N^{-d} \sum_{x \in {\mathbb Z}_N^d} |f_U(x)f_V(x)f_{U'}(x)f_{V'}(x)|.$$ 

By H\"older's inequality, this quantity is bounded by 
$$ N^{-d} {\left( \sum_x {|f_U(x)|}^4 \right)}^{\frac{1}{4}}{\left( \sum_x {|f_V(x)|}^4 \right)}^{\frac{1}{4}}{\left( \sum_x {|f_{U'}(x)|}^4 \right)}^{\frac{1}{4}}{\left( \sum_x {|f_{V'}(x)|}^4 \right)}^{\frac{1}{4}},$$ which equals 
$$ {\left( \Lambda(U)\Lambda(V)\Lambda(U')\Lambda(V')\right)}^{\frac{1}{4}},$$ as claimed in (\ref{equation:multienergy}). 

We are now ready to complete the proof of the lemma. We have 
 \begin{align}\notag
{||h||}_{L^4({\mathbb Z}_N^d)}&={\left( \sum_{x \in {\mathbb Z}_N^d} \sum_{m, l, m', l' \in S} \chi(x \cdot (m+l-m'-l')) \overline{\widehat{h}(m)} \overline{\widehat{h}(l)} \widehat{h}(m') \widehat{h}(l') \right)}^{\frac{1}{4}}\\\notag
& \leq N^{\frac{d}{4}} \cdot {\left( 2^4\sum_{j,k,j',k'} 2^{-(j+k+j'+k')} \Lambda^{\frac{1}{4}}(U_j)\Lambda^{\frac{1}{4}}(U_k)\Lambda^{\frac{1}{4}}(U_{j'})\Lambda^{\frac{1}{4}}(U_{k'}) \right)}^{\frac{1}{4}} 
\\\label{equation:prep}  & \leq 2N^{\frac{d}{4}} \cdot C^{\frac{1}{4}} \cdot \sum_j 2^{-j} {|U_j|}^{\frac{\alpha}{4}}. 
 \end{align}

Observe that 
$$ {||\widehat{h}||}_{L^2({\mathbb Z}_N^d)} \leq 2{\left( \sum_j 2^{-2j} |U_j| \right)}^{\frac{1}{2}}$$ by a direct calculation. It follows that 
$$ \sum_{j=M}^{\infty} 2^{-j} {|U_j|}^{\frac{\alpha}{4}} \leq {|S|}^{\frac{\alpha}{4}} \cdot 2^{-M+1}.$$ 

Consequently, 
\begin{equation} \label{equation:high} 2C^{\frac{1}{4}} \cdot N^{\frac{d}{4}} \cdot \sum_{j=M}^{\infty} 2^{-j} {|U_j|}^{\frac{\alpha}{4}} \leq 4C^{\frac{1}{4}} \cdot {\left( \frac{{|S|}^{\alpha-2}}{N^d} \right)}^{\frac{1}{4}} \cdot {||h||}_{L^2({\mathbb Z}_N^d)} \cdot  \left[ {|S|}^{\frac{1}{2}} \cdot 2^{-M} \right] \leq  2C^{\frac{1}{4}} \cdot {\left( \frac{{|S|}^{\alpha-2}}{N^d} \right)}^{\frac{1}{4}} \cdot {||h||}_{L^2({\mathbb Z}_N^d)} \end{equation} if we choose $M$ such that 
\begin{equation} \label{equation:Mvalue} 2^M \geq 2{|S|}^{\frac{1}{2}}\geq 2^{M-1}.\end{equation} 

On the other hand,
\begin{align} \label{equation:low} 2C^{\frac{1}{4}} \cdot N^{\frac{d}{4}} \cdot \sum_{j=0}^{M-1} 2^{-j} {|U_j|}^{\frac{\alpha}{4}-\frac{1}{2}} \cdot {|U_j|}^{\frac{1}{2}} &\leq 2C^{\frac{1}{4}} \cdot N^{\frac{d}{4}} \cdot {\left( \sum_{j=0}^{M-1} 2^{-2j} |U_j| \right)}^{\frac{1}{2}} \cdot {\left( \sum_{j=0}^{M-1} {|U_j|}^{\frac{\alpha-2}{2}} \right)}^{\frac{1}{2}}
 \\\notag 
& \approx 2C^{\frac{1}{4}} \cdot N^{-\frac{d}{4}} \cdot {||h||}_2 \cdot {\left( \sum_{j=0}^{M-1} {|U_j|}^{\frac{\alpha-2}{2}} \right)}^{\frac{1}{2}}.
\end{align}

We have 
$$ \sum_{j=0}^{M-1} {|U_j|}^{\frac{\alpha-2}{2}} =\sum_{\{j: |U_j|<2^{j \delta}\}} {|U_j|}^{\frac{\alpha-2}{2}} + 
\sum_{\{j: |U_j| \ge 2^{j \delta}\}} {|U_j|}^{\frac{\alpha-2}{2}}=I+II.$$

\vskip.125in 

By a direct calculation, using (\ref{equation:Mvalue}), we see that 
$$ |I| \leq 2^{M \delta \frac{\alpha-2}{2}} \leq {\left( 4 {|S|}^{\frac{1}{2}} \right)}^{\delta \frac{\alpha-2}{2}} \leq {(16|S|)}^{\delta \frac{\alpha-2}{4}},$$ while for any $\beta>\alpha$, 
$$ II \leq \sum_{\{j: |U_j| \ge 2^{j \delta}\}} {|U_j|}^{\frac{\beta-2}{2}} 2^{-j \delta \frac{\beta-\alpha}{2}} 
\leq {|S|}^{\frac{\beta-2}{2}} \cdot \sum_{j=0}^{M-1} 2^{-j \delta \frac{\beta-\alpha}{2}} \leq {|S|}^{\frac{\beta-2}{2}} \cdot \frac{1}{1-2^{- \delta \frac{\beta-\alpha}{2}}}.$$

Taking $\delta=2$, we see that 
$$ 2C^{\frac{1}{4}} \cdot N^{\frac{d}{4}} \cdot \sum_{j=0}^{M-1} 2^{-j} {|U_j|}^{\frac{\alpha}{4}} 
\leq 2\left(16+\frac{1}{1-2^{-(\beta-\alpha)}} \right) \cdot C^{\frac{1}{4}} \cdot N^{-\frac{d}{4}} \cdot {||h||}_{L^2({\mathbb Z}_N^d)} \cdot {|S|}^{\frac{\beta-2}{4}}.$$

Putting everything together, we see that 
$$ {||h||}_{L^4(T)} \leq 2\left(17+\frac{1}{1-2^{-(\beta-\alpha)}} \right) \cdot C^{\frac{1}{4}} \cdot N^{-\frac{d}{4}} \cdot {|U|}^{\frac{\alpha}{4}-\frac{1}{2}} \cdot {||h||}_{L^2(T)}$$ and the proof is complete.  

\vskip.25in 

\newpage

\end{document}